\def\thefigure{\thesection.\@arabic\c@figure}
\def\fps@figure{h,t}
\def\thetable{\thesection.\@arabic\c@table}
\def\fps@table{h, t}
\newtheorem{theorem}{Theorem}
\newtheorem{corollary}[theorem]{Corollary}
\newtheorem{definition}[theorem]{Definition}
\newtheorem{lemma}[theorem]{Lemma}
\newtheorem{proposition}[theorem]{Proposition}
\newenvironment{proof}[1][Proof]{\noindent\textbf{#1.} }{\ \rule{0.5em}{0.5em}}
\newcommand{\bfi}{\bfseries\itshape}
\begin{document}

\title{\textbf{Knudsen's law and random billiards in irrational triangles}}
\author{Joan-Andreu L\'{a}zaro-Cam\'{\i}$^{1}$, Kamaludin Dingle$^{2}$, and Jeroen
Lamb$^{3}$.$\bigskip$\\$^{1,3}${\small Department of Mathematics. Imperial College London.}\\{\small 180 Queen's Gate, SW7 2AZ, London, UK.}\\$^{2}${\small Systems Biology DTC, Oxford University, OX1 3QD, UK.}}
\date{}
\maketitle

\begin{abstract}
We prove Knudsen's law for a gas of particles bouncing freely in a two
dimensional pipeline with serrated walls consisting of irrational triangles.
Dynamics are randomly perturbed and the corresponding random map studied under
a skew-type deterministic representation which is shown to be ergodic and exact.

\end{abstract}

\makeatletter\addtocounter{footnote}{1} \footnotetext{e-mail:
\texttt{j.lazaro-cami@imperial.ac.uk}} \addtocounter{footnote}{1}
\footnotetext{e-mail: \texttt{kamaludin.dingle@sjc.ox.ac.uk}}
\addtocounter{footnote}{1} \footnotetext{e-mail:
\texttt{jsw.lamb@imperial.ac.uk}} \makeatother

\bigskip

\noindent\textbf{Keywords:} Knudsen's law, random billiards, random maps,
irrational polygons, ergodic billiards.

\bigskip

\section{Introduction\label{section introduction}}

In his nowadays classical studies on the kinetic theory of gases, the Danish
physicist M. Knudsen experimentally observed that, no matter how an inert gas
was injected into a pipeline, the direction in which a molecule rebounds from
the pipeline's solid wall is asymptotically independent of its initial
trajectory. That is, the fraction of particles leaving the surface in a given
direction is proportional to $\cos\left(  \theta\right)  $, where $\theta$ is
the angle that such a particle's trajectory defines, measured with respect to
the normal to the surface (\cite{knudsen book}). This behaviour is referred to
as the {\bfseries\itshape Knudsen's (cosine) law} ever since. In the
experiment, the gas is injected at a very low pressure so that interactions
between particles are negligible.

The physical justification of Knudsen's law is the following. First of all,
one assumes that particles bounce at the pipeline's wall \textit{elastically}.
This means that the energy of a particle is preserved in a collision which, in
turn, implies the \textit{refection law}: the angle that the incident
direction forms with the normal to a surface, $\theta_{in}$, equals the angle
$\theta_{ref}$ of the reflected direction, $\theta_{in}=\theta_{ref}$. It has
been proved that this law is valid in a first approximation if we do not take
into account thermal effects. Since we also assume that particles do not
interact with one another, we must conclude that the microscopic
irregularities on the pipeline's surface are responsible for the destruction
of any particular pattern in the original gas distribution. Indeed, even if we
assume that the bounces are perfectly elastic, microscopic holes in the
boundary of the pipeline and imperfections in relief are dimensionally
comparable to the molecules of the gas and have therefore a disruptive (i.e.,
unpredictable) effect on particle collisions: after many bounces, we are in
the so-called \textit{Knudsen's regime} in which the reflected direction is
independent of the incident one. However, this argument does not explain by
itself why the reflected angles are distributed according to the
\textit{cosine law}. The theory of \textit{billiards} helps clarify this point
(\cite{feres knudsen law}, \cite{feres random walks}).

The irregularities of the pipeline surface can be reasonably modelled as
cavities or microscopic cells with a dispersive geometry such that, once a
particle has entered one of them, it comes out of it with a rather arbitrary
direction, even if all the collisions inside that cavity are elastic. If the
pipeline is made of a uniform material, we can model one such cell as a
billiard table and the pipeline wall as an infinite row of such billiard
tables that a particle moving freely enters and exits. With this description,
Knudsen's cosine law can be then seen as a consequence of the fact that, for
sufficiently dispersive billiard geometries, the Liouville measure is the
unique measure preserved by the billiard flow (see \cite{chernov book} for
more details). Unfortunately, this need not be the case for polygonal (i.e.,
non-dispersive) tables such as Fig. \ref{fig pipeline}. So polygonal
geometries require a slightly different approach.

Since the characteristic size of this cell is infinitely small compared with
the diameter of the pipeline, the exact position along the open side of the
cell (dotted line in Fig. \ref{fig pipeline}) at which the particle enters a
cell coming from the previous one is considered to be randomly distributed
with uniform probability. Following \cite{feres knudsen law}, we will refer to
these billiards as \textit{random billiards}. More concretely, the dynamics of
a particle bouncing inside the pipeline are given by the \textit{first return
map} to the open side of a the billiard cell, which defines a Markov process.
Thus, the dynamics are characterised by a transition operator $K\left(
\theta,A\right)  $ such that, for any reflected direction $\theta\in
\lbrack0,\pi]$, $K\left(  \theta,A\right)  $ is the probability that a
molecule takes a direction in $A\subseteq\lbrack0,\pi]$ after the next
rebound. These concepts will be reviewed in Sections
\ref{section preliminaries} and \ref{section random billiards}. In this
formalism, Knudsen's law can be mathematically written as follows.

Let $\nu$ be the initial angle distribution with which the gas is injected
into the pipeline. That is, if $A\subseteq\lbrack0,\pi]$, $\nu(A)$ represents
the proportion of particles that will hit the pipeline's wall for the first
time with an incident angle $\theta\in A$. Denote by $\nu^{(n)}$ the
distribution after $n$ collisions. In this context, Knudsen's cosine law can
be expressed in two different ways. The {\bfseries\itshape strong Knudsen's
law} claims that, for almost any initial distribution $\nu$, after many
collisions $\nu^{(n)}$ converges to the cosine distribution,
\begin{equation}
\nu^{(n)}(A)\underset{n\rightarrow\infty}{\longrightarrow}\frac{1}{2}\int
_{A}\cos\left(  \theta\right)  d\theta,\text{ \ }A\subseteq\lbrack0,\pi].
\label{eq strong law}%
\end{equation}
On the other hand, the {\bfseries\itshape weak Knudsen's law} states that if,
after any collision, we count the number of particles reflected with a given
angle $\theta$ and then divide by the total number of particles, this quantity
is proportional to $\cos(\theta)$. Explicitly,%
\begin{equation}
\frac{1}{n}\sum_{i=1}^{n}\nu^{(i)}(A)\underset{n\rightarrow\infty
}{\longrightarrow}\frac{1}{2}\int_{A}\cos\left(  \theta\right)  d\theta.
\label{eq weak law}%
\end{equation}
Obviously (\ref{eq strong law}) implies (\ref{eq weak law}), so the strong
Knudsen's law implies the weak one.

Deterministic dynamical systems can be sometimes understood as idealizations
of \textit{real} systems, the latter usually subjected to negligible
perturbations in a first approximation. The use of a dynamical approach to
study a gas is one such example. In this paper, we will study the properties
of the random billiard obtained as a result of modelling the rough surface of
the pipeline by a zig-zag geometry as shown in Fig. \ref{fig pipeline}. Our
billiard cell is an isosceles triangle with open side $\overline{pq}$ defined
by an angle $\alpha$. This model was introduced in \cite{feres random walks}
but has not been studied in depth yet. For example, in \cite{feres random
walks} nothing is said about the ergodic properties of this billiard table
when $\alpha/2\pi\notin\mathbb{Q}$ (it is shown that it is not ergodic under
the Liouville measure if $\alpha/2\pi\in\mathbb{Q}$ though). Thus it is not at
all clear if the dynamics resulting from a random billiard as in Fig.
\ref{fig billiard table} will in fact follow Knudsen's law (weak or strong).
In this paper, we will prove that it is actually the case. More concretely,
the main contributions of our paper are the following:

\begin{enumerate}
\item Little is known about the ergodicity of the flow of deterministic
billiards given by irrational polygon tables (\cite{gutkin open problems}).
Remarkably, we show that, for a concrete example, randomising the billiard in
a sensible way (i.e., choosing the point at which the particle enters the
billiard randomly but uniformly distributed on one side of the table) implies
its ergodicity and exactness with respect to the Liouville measure.

\item Unlike other attempts to prove (weak) Knudsen's law, where one usually
assumes very irregular geometries at a microscopic level responsible for the
dispersive effects, we deal with an extremely simple pattern. This has
additional advantages as a simpler model capturing the main features of a
system can be simulated and developed more easily.

\item In the literature, ergodicity of random billiards relies on the
ergodicity of the first return map (see \cite{feres random walks}). We use
here a different approach and take advantage of a skew-type representation of
random maps recently introduced in \cite{bashoun}. The techniques used to
prove the exactness of our model can be certainly adapted to study other
(random) billiard tables and random maps in general.

\item Finally, we prove that the strong Knudsen's law holds for our model. As
we mentioned before, even if one assumes dispersive billiard geometries, one
will only prove that the first return map is ergodic, i.e., the weak law.
\end{enumerate}

The paper is structured as follows: in Section \ref{section model} we
introduce the random billiard behind our model. We recall some basic
definitions and properties of billiards in Section \ref{section preliminaries}
and the concept of random billiard in Section \ref{section random billiards}.
In Section \ref{section random maps}, we review the skew-type representation
of random maps introduced in \cite{bashoun} and show the relationship between
this representation and the dynamical evolution of absolutely continuous
measures. We prove that this skew-type representation is exact in Section
\ref{section exactness of S}, which uses some auxiliary results presented
separately in Section \ref{section ergodicity of SS}. Finally, in Section
\ref{section knudsen's law}, we illustrate our results with numerical simulations.

\section{The Model\label{section model}}

Suppose we have a gas of non-interacting particles moving freely inside a
pipeline. Our model consists of a two dimensional infinite pipeline whose
rough walls are modelled as a sequence of cells built as a juxtaposition of a
\textit{fundamental cell }(Fig. \ref{fig billiard table}). In this section, we
are going to fix the geometry of such cell and introduce the main hypothesis
behind the dynamics of the particles. The content of this section is extracted
from \cite{feres random walks}.

Both the geometry and dynamics of our model are summarised in Figure
\ref{fig pipeline}. We assume that the walls of the pipeline are not smooth
but describe a serrated regular pattern built from a fixed isosceles triangle
with an open side. We characterise that triangle by one of its angles $\alpha
$. The dynamics of a particle moving freely inside the pipeline are as
follows: the particle enters one of the cells with some angle $\theta_{in}$
and bounces on its sides elastically, that is, according to the reflection
law. The particle eventually comes out of the cell with a given direction
$\theta_{out}$, then crosses the pipeline, and reaches another cell on the
opposite wall, and the process recurs. Since the diameter of the pipeline is
several orders of magnitude bigger than the characteristic length
$\overline{pq}$ of the cell, it is plausible to think that, every time the
particle enters a new cell through the open side, it does so at a point
uniformly distributed on $\overline{pq}$. \vspace{-0.5cm}

\begin{figure}[h]
\begin{center}
\includegraphics[height=6cm,width=12.5cm]{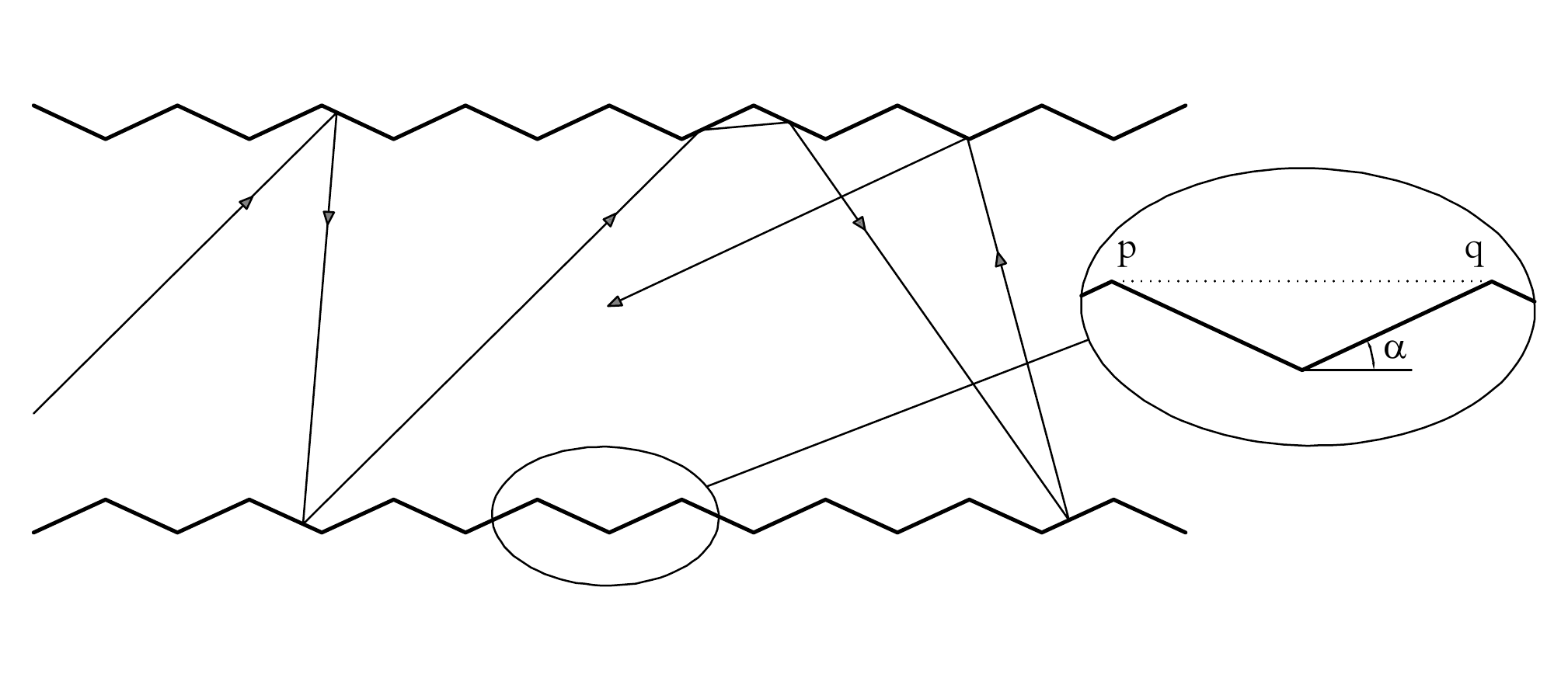}
\end{center}
\par
\vspace{-1cm} \caption{Particle bouncing in a pipeline with serrated
triangular boundaries.}%
\label{fig pipeline}%
\end{figure}

In the literature, one encounters examples of particle dynamics on bounded
domains where the random perturbation is introduced in the observations of
$\theta_{out}$ every time the particles bounces off the wall (see for example
\cite{comets}), which implies that collisions are no longer elastic. This
perturbation is explained as a consequence of the roughness of the wall.
Observe that, in our model, randomness is introduced in a completely different
way. We assume elastic collisions but, instead of considering dispersive
geometries responsible for the random bounces, the noise is introduced in a
sensible way without modifying the (rather elementary) geometry of the problem.

The trajectory that a particle describes after it comes out of a billiard cell
until it enters another one on the opposite wall is completely irrelevant for
dynamical purposes. In practice, we can better understand our dynamical system
as the closed billiard table obtained by fixing together two triangular cells
so that they share the same open side as in Figure \ref{fig billiard table}.
When the particle crosses that open side, it retains its direction but it is
assigned a different position on $\overline{pq}$ according to a uniform
law.\vspace{-0.5cm}

\begin{figure}[h]
\begin{center}
\includegraphics[height=4.8cm,width=10cm]{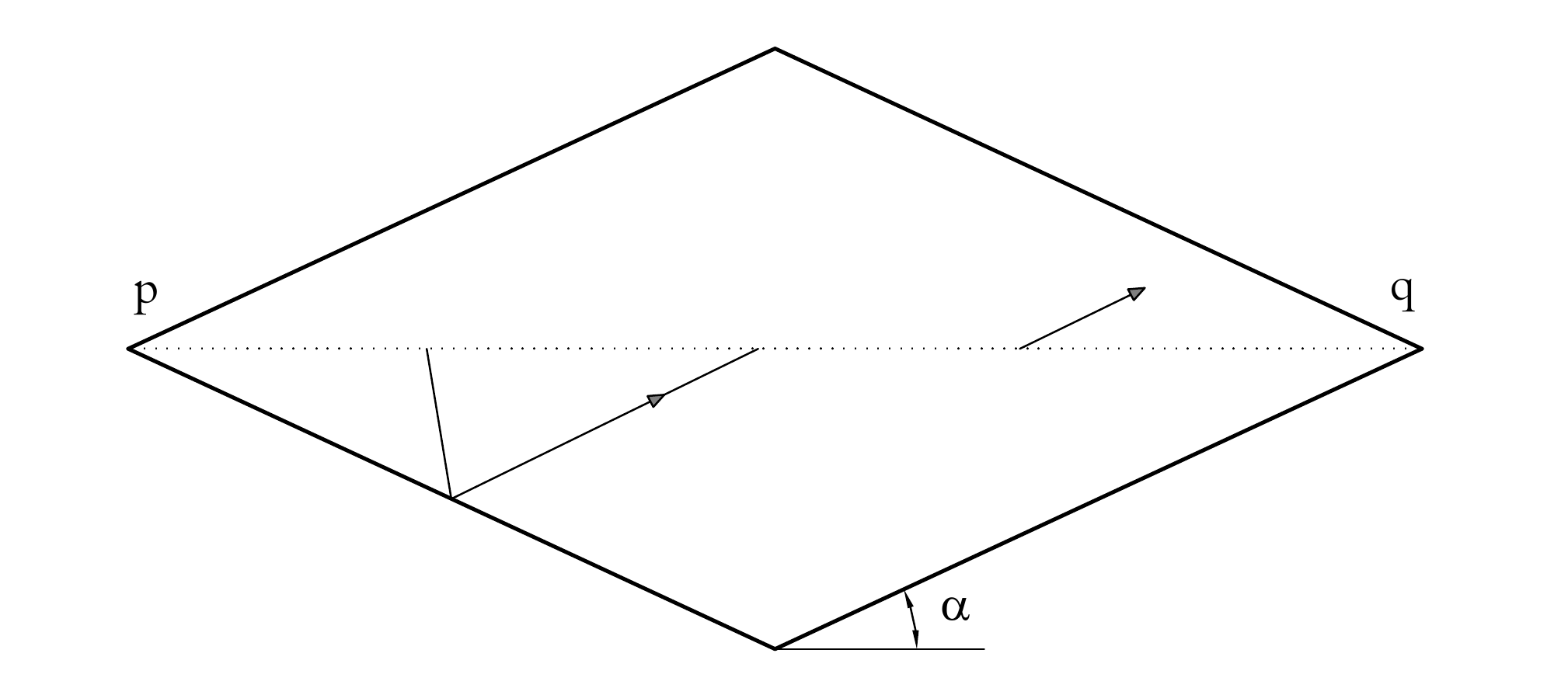}
\end{center}
\par
\vspace{-0.5cm} \caption{Fundamental cell. A particle crosses the open side
$\overline{pq}$ at a uniformly distributed random point.}%
\label{fig billiard table}%
\end{figure}

The {\bfseries\itshape incoming angle} $\theta_{in}$ (resp.
{\bfseries\itshape
outgoing angle}) is then the angle in $[-\pi/2,\pi/2]$ with which the particle
enters (resp. leaves) the cell measured with respect to the normal to the open
side $\overline{pq}$. One often shifts $[-\pi/2,\pi/2]$ by $\pi/2$ and measure
$\theta$ in $[0,\pi]$, i.e. with respect to horizontal. In \cite{feres random
walks}, for any incoming angle $\theta\in\lbrack0,\pi]$, Feres gives the
different possible directions with which a particle may come out of the
fundamental cell (Fig. \ref{fig billiard table}). They are four, given by maps
$\tau_{i}:[0,\pi]\rightarrow\mathbb{R}$ defined as%
\begin{align}
\tau_{1}\left(  \theta\right)   &  =\theta+2\alpha,~~\tau_{2}\left(
\theta\right)  =-\theta+2\pi-4\alpha\nonumber\\
\tau_{3}\left(  \theta\right)   &  =\theta-2\alpha,~~\tau_{4}\left(
\theta\right)  =-\theta+4\alpha, \tag{M1}\label{eq tau maps}%
\end{align}
and any $\tau_{i}$ applies with certain probability $p_{i}$. The probabilities
associated to the maps (\ref{eq tau maps}) are as follows:%
\begin{align}
p_{1}\left(  \theta\right)   &  =\left\{
\begin{array}
[c]{ll}%
1 & \theta\in\lbrack0,\alpha)\\
u_{\alpha}\left(  \theta\right)  & \theta\in\lbrack\alpha,\pi-3\alpha)\\
2\cos(2\alpha)u_{2\alpha}(\theta) & \theta\in\lbrack\pi-3\alpha,\pi-2\alpha)\\
0 & \theta\in\lbrack\pi-2\alpha,\pi]
\end{array}
\right.  \smallskip\nonumber\\
p_{2}\left(  \theta\right)   &  =\left\{
\begin{array}
[c]{ll}%
0 & \theta\in\lbrack0,\pi-3\alpha)\\
u_{\alpha}\left(  \theta\right)  -2\cos(2\alpha)u_{2\alpha}(\theta) &
\theta\in\lbrack\pi-3\alpha,\pi-2\alpha)\\
u_{\alpha}\left(  \theta\right)  & \theta\in\lbrack\pi-2\alpha,\pi-\alpha)\\
0 & \theta\in\lbrack\pi-\alpha,\pi]
\end{array}
\right.  \smallskip\nonumber\\
p_{3}\left(  \theta\right)   &  =\left\{
\begin{array}
[c]{ll}%
0 & \theta\in\lbrack0,2\alpha)\\
2\cos(2\alpha)u_{2\alpha}(-\theta) & \theta\in\lbrack2\alpha,3\alpha)\\
u_{\alpha}\left(  -\theta\right)  & \theta\in\lbrack3\alpha,\pi-\alpha)\\
1 & \theta\in\lbrack\pi-\alpha,\pi]
\end{array}
\right.  \smallskip\nonumber\\
p_{4}\left(  \theta\right)   &  =\left\{
\begin{array}
[c]{ll}%
0 & \theta\in\lbrack0,\alpha)\\
u_{\alpha}\left(  -\theta\right)  & \theta\in\lbrack\alpha,2\alpha)\\
u_{\alpha}\left(  -\theta\right)  -2\cos(2\alpha)u_{2\alpha}(-\theta) &
\theta\in\lbrack2\alpha,3\alpha)\\
0 & \theta\in\lbrack3\alpha,\pi],
\end{array}
\right.  \tag{M2}\label{eq probabilities model}%
\end{align}
where%
\begin{equation}
u_{\alpha}\left(  \theta\right)  =\frac{1}{2}\left(  1+\frac{\tan\alpha}%
{\tan\theta}\right)  \label{eq definition function u}%
\end{equation}
and $\alpha$ is assumed to be smaller than $\pi/6$. As we will see later, this
family of maps and probabilities is all we need to prove Knudsen's law.

\section{Preliminaries on Billiard Dynamics\label{section preliminaries}}

This section aims at recalling the main concepts of the theory of
(deterministic) billiards and one of its key results, namely, that the
\textit{billiard map} preserves the Liouville measure. The content of this
section was extracted from \cite{chernov book}, which the reader is referred
to for an exhaustive exposition on billiards.

A {\bfseries\itshape billiard table} $D$ is the closure of a bounded open
domain $D_{0}\subset\mathbb{R}^{2}$ whose boundary $\partial D=\Gamma_{1}%
\cup\cdots\cup\Gamma_{m}$ is a finite union of smooth compact curves ($C^{l}$,
$l\geq3$). $\Gamma_{1},...,\Gamma_{n}$ are called the {\bfseries\itshape
walls} of the billiard table and are assumed to intersect each other only at
their endpoints or {\bfseries\itshape corners}. For any $i=1,...,n$,
$\Gamma_{i}$ is defined by a $C^{l}$ map $f_{i}:[a_{i},b_{i}]\subset
\mathbb{R}\rightarrow\mathbb{R}^{2}$ such that the second derivative
$f_{i}^{\prime\prime}$ either vanishes or is identically zero. This condition
prevents the wall $\Gamma_{i}$ from having inflection points or line segments.
A wall such that $f^{\prime\prime}\neq0$ is called {\bfseries\itshape
focusing} or {\bfseries\itshape dispersing} if $f^{\prime\prime}$ points
inwards or outwards $D$, respectively. Otherwise $f^{\prime\prime}=0$ and the
wall is called {\bfseries\itshape flat}. In this framework, we consider the
dynamics of a point particle moving freely within $D$.

Let $q\in D$ denote the position of a free particle moving within a billiard
table. We suppose that the particle bounces at $\partial D$
{\bfseries\itshape
elastically}. That is, if it collides with the wall $\Gamma$ at a point $p$
that is not a corner, the incident angle $\theta_{in}$ that the velocity $v$
forms with the normal vector $n$ to $\Gamma$ at $p$ equals the angle of
reflection $\theta_{ref}$ between $n$ and $v$ after the collision, i.e.
$\theta_{in}=\theta_{ref}$. This implies that $v$ has constant norm, which we
will assume equal to $1$ for the sake of simplicity. Under these assumptions,
the {\bfseries\itshape billiard flow} $\Phi$ is the flow that this particle
defines on the {\bfseries\itshape phase space} $D\times\mathbb{S}^{1}$. Since
the particle follows a straight trajectory between two consecutive collisions,
the dynamical information of the system is contained in the geometry of the
boundary $\partial D$ and how the particle bounces off it. Therefore, instead
of studying the billiard flow, first one introduces the cross-section
$M=\partial D\times\lbrack-\pi/2,\pi/2]$ as the set of all postcollisional
velocity vectors, where $\theta\in\lbrack-\pi/2,\pi/2]$ measures the angle
between $v$ and the normal vector $n$ pointing to the interior of $D$, and
then considers the first return map $F$ that the billiard flow induces on $M$.
That is, given a point of $M$, $F$ gives the next position at where the
particle collides and its velocity after that collision. The map $F$ is often
called the {\bfseries\itshape billiard map}.

It is a well known result in the theory of billiards that the billiard map
preserves the {\bfseries\itshape Liouville measure} $\lambda\otimes\mu$ on
$M$, where $\lambda$ is the Lebesgue measure on $\partial D$ and $\mu$ is the
measure on $[-\pi/2,\pi/2]$ given by $\mu(A)=%
\frac12
\int_{A}\cos(\theta)d\theta$, $A\subseteq\mathcal{B}\left(  [-\pi
/2,\pi/2]\right)  $ (\cite[Lemma 2.35]{chernov book}). However, except for a
few general results, little is known about the ergodicity of billiard maps
with respect to the Liouville measure for explicit examples of boundary
geometries. For example, dispersive billiards are ergodic (\cite[Theorem
6.20]{chernov book}) while regular polygons are not.

Finally, as we pointed out in Section \ref{section model}, it is customary in
the literature to translate $[-\pi/2,\pi/2]$ and measure $\theta$ in $[0,\pi
]$, in which case $\mu$ is given by $\mu(A)=%
\frac12
\int_{A}\sin(\theta)d\theta$, $A\subseteq\mathcal{B}\left(  [0,\pi]\right)  $.
We will follow this convention throughout the paper.

\section{First Return Map. Random Billiards\label{section random billiards}}

In this section we are going to get back to the concept of \textit{random
billiard} and its different representations.

Let $D$ be a billiard table with boundary $\partial D=\Gamma_{1}\cup\cdots
\cup\Gamma_{m}$. Remove one of the walls of $\partial D$, for example
$\Gamma_{1}$, so that we obtain a billiard table with an \textit{open side}.
Without loss of generality, we can assume that the open side is just a line
segment $\overline{pq}$ as in Figure \ref{fig billiard table}. This
identification can be carried out by the map $f_{1}:[a_{1},b_{1}%
]\subset\mathbb{R}\rightarrow\mathbb{R}^{2}$ defining $\Gamma_{1}$. On the
other hand, suppose a particle enters the billiard from the open side at
$x\in\overline{pq}$ with an angle $\theta_{in}\in\lbrack0,\pi]$ with respect
to $\Gamma_{1}$. Denote by $\Psi_{x}(\theta_{in})\in\lbrack0,\pi]$ the angle
associated to the velocity of that particle when it returns to the open side
for the first time. The {\bfseries\itshape first return map} (to the open
side) is then the map $T:\overline{pq}\times\lbrack0,\pi]\rightarrow
\overline{pq}\times\lbrack0,\pi]$ induced from the billiard map $F$ such that
$T\left(  x,\theta_{in}\right)  =\left(  y,\Psi_{x}(\theta_{in})\right)  $,
i.e., the particle leaves the billiard at $y\in\overline{pq}$ with an angle
$\theta_{out}=\Psi_{x}(\theta_{in})$.

The Liouville measure $\lambda\otimes\mu$ on $\partial D\times\lbrack0,\pi]$
induces a measure on $\overline{pq}\times\lbrack0,\pi]$ by restriction in a
natural way. We will continue referring to this measure as the Liouville
measure and denoting it by $\lambda\otimes\mu$, where $\lambda$ now stands for
the Lebesgue measure on $\overline{pq}$. It can be proved that the first
return map $T$ also preserves the Liouville measure (\cite[Exercise
I.5.1]{chernov billiard notes}). Furthermore,

\begin{lemma}
\label{lemma ergodicity first return}If the billiard map $F:\partial
D\times\lbrack0,\pi]\rightarrow\partial D\times\lbrack0,\pi]$ is ergodic with
respect to $\lambda\otimes\mu$, so is the first return map $T:\overline
{pq}\times\lbrack0,\pi]\rightarrow\overline{pq}\times\lbrack0,\pi]$.
\end{lemma}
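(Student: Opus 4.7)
The plan is a standard Kakutani/Rokhlin-tower argument: starting from a $T$-invariant set $A$ on the cross-section $M_0 := \overline{pq}\times[0,\pi]$, I build a saturation $\widetilde A$ on the full phase space $M := \partial D\times[0,\pi]$, show $\widetilde A$ is $F$-invariant, and invoke ergodicity of $F$ to conclude $A = M_0$ mod null. First I would introduce the first-return time
$$ r(y) := \min\{k\geq 1 \;:\; F^k(y)\in M_0\},\qquad y\in M_0. $$
Because $\partial D$ is compact, $\lambda\otimes\mu$ is a finite $F$-invariant measure on $M$, so Poincar\'e recurrence yields $r<\infty$ for $\lambda\otimes\mu$-almost every $y\in M_0$; the same recurrence statement, applied to iterates $F^{-k}(M_0)$, shows that $\lambda\otimes\mu$-almost every point of $M$ lies in $F^k(M_0)$ for some $k\geq 0$. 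Together these give the tower decomposition, valid modulo a null set,
$$ M \;=\; \bigsqcup_{k=0}^{\infty} F^k\bigl(\{y\in M_0 : r(y)>k\}\bigr), $$
where each $F^k$ is a measure-preserving bijection onto its image.

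Next, given a $T$-invariant set $A\subseteq M_0$ with $(\lambda\otimes\mu)(A)>0$, I would define
$$ \widetilde A \;:=\; \bigsqcup_{k=0}^{\infty} F^k\bigl(A\cap\{r>k\}\bigr) $$
and verify two properties. First, $\widetilde A \cap M_0 = A$ mod null, since a point of the $k$-th layer with $k\geq 1$ lying in $M_0$ would contradict the definition of $r$; only the $k=0$ summand contributes. Second, $\widetilde A$ is $F$-invariant mod null: applying $F$ to the $k$-th layer gives $F^{k+1}(A\cap\{r>k\})$, which splits into the portion with $r>k+1$ (landing in the $(k+1)$-th layer of $\widetilde A$) and the portion with $r=k+1$ (whose image under $F^{k+1}$ coincides with $T$ applied to a subset of $A$, hence lies in $A$ by the $T$-invariance $T^{-1}(A)=A$ mod null, giving the $k=0$ layer). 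Summing these pieces over $k$ reconstructs $\widetilde A$.

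Once these are in place, the conclusion is immediate: $(\lambda\otimes\mu)(\widetilde A)\geq (\lambda\otimes\mu)(A)>0$, so ergodicity of $F$ forces $\widetilde A = M$ mod null, whence $A = \widetilde A\cap M_0 = M_0$ mod null, proving ergodicity of $T$. The step I expect to demand the most care is the $F$-invariance in the second item above, where one must reconcile $T$-invariance, naturally phrased as a single mod-null equality $T^{-1}(A)=A$, with the countably many exceptional null sets arising level by level in the tower. This is handled by collecting all of them into one $\lambda\otimes\mu$-null set via countable additivity, using that $T$ is a measure-preserving bijection between full-measure subsets of $M_0$, a fact recorded before the lemma.
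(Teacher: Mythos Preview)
Your argument is correct, but it is not the route the paper takes. You build the full Kakutani--Rokhlin skyscraper over $M_0$, saturate $A$ forward to $\widetilde A=\bigsqcup_{k\ge 0}F^k(A\cap\{r>k\})$, and check $F$-invariance of $\widetilde A$ level by level. The paper instead works entirely with \emph{preimages}: given a $T$-invariant $A\subseteq M_0$ of positive measure, it considers $B:=\bigcup_{n\ge 1}F^{-n}(A)$, notes that $B$ is $F$-invariant mod null and hence has full measure in $M$, and then observes directly that any $y\in B\cap M_0$ satisfies $F^n(y)\in A\subseteq M_0$ for some $n$, so $T^{m}(y)=F^n(y)\in A$ for some $m\le n$, whence $y\in T^{-m}(A)=A$; thus $A=M_0$ mod null. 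The paper's argument is shorter and does not rely on invertibility of $F$ or on assembling the tower, whereas your approach, while heavier for this particular lemma, supplies the skyscraper structure explicitly and would immediately yield side results such as Kac's formula if they were needed.
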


\begin{proof}
Let $A\in\mathcal{B}(\overline{pq})\otimes\mathcal{B}([0,\pi])$ be an
invariant set, i.e., $T^{-1}\left(  A\right)  =A$. This set can be naturally
regarded as a measurable set of $\partial D\times\lbrack0,\pi]$ since
$\overline{pq}\subset\partial D$ is identified with the wall $\Gamma_{1}$.
Since $F$ is ergodic, $%
{\textstyle\bigcup\nolimits_{n\geq1}}
F^{-n}\left(  A\right)  $ has full $\left.  \lambda\otimes\mu\right\vert
_{\partial D\times\lbrack0,\pi]}$-measure, which means that%
\[
\left.  \lambda\otimes\mu\right\vert _{\partial D\times\lbrack0,\pi]}((%
{\textstyle\bigcup\nolimits_{n\geq1}}
F^{-n}\left(  A\right)  )\bigcap\left(  \overline{pq}\times\lbrack
0,\pi]\right)  )=1.
\]
If $y\in F^{-n}\left(  A\right)  \bigcap\left(  \overline{pq}\times
\lbrack0,\pi]\right)  $, then $y=F^{-n}\left(  x\right)  $ for some $x\in A$
and, from the very definition of the first return map that, $y=T^{-m_{n}%
}\left(  x\right)  $ for some $m_{n}\leq n$. Therefore,%
\[
\left(  \bigcup\nolimits_{n\geq1}F^{-n}\left(  A\right)  \right)
\bigcap\left(  \overline{pq}\times\lbrack0,\pi]\right)  =\bigcup
\nolimits_{m\geq1}T^{-m}\left(  A\right)  =A,
\]
where the last equality follows from the invariance of $A$. Consequently,
$\left.  \lambda\otimes\mu\right\vert _{\overline{pq}\times\lbrack0,\pi
]}(A)=1$ and $T$ is ergodic.\smallskip
\end{proof}

Let $\left(  \overline{pq},\mathcal{B}(\overline{pq}),\lambda\right)  $ be the
probability space built upon $\overline{pq}$ with the normalised Lebesgue
measure $\lambda$ on the Borel $\sigma$-algebra $\mathcal{B}(\overline{pq})$.
The {\bfseries\itshape random billiard} associated to $D$ is then the
time-discrete random dynamical system $\left\{  \Psi_{x}:[0,\pi]\rightarrow
\lbrack0,\pi]~|~x\in\overline{pq}\right\}  $, where $\overline{pq}$ is
regarded as a probability space. Roughly speaking, the random billiard thus
built models the outgoing state a free particle that entered the billiard
through the open side at a point uniformly distributed along $\overline{pq}$.
When regarded as a probability space, $\overline{pq}$ will be denoted by
$\Omega$.

The random billiard $\left\{  \Psi_{x}:[0,\pi]\rightarrow\lbrack0,\pi
]~|~x\in\Omega\right\}  $ defines a transition probability kernel
$K:[0,\pi]\times\mathcal{B}\left(  [0,\pi]\right)  \rightarrow\lbrack0,1]$
given by%
\begin{equation}
K(\theta,A)=\lambda\left(  \{x\in\Omega~%
\vert
~\Psi_{x}(\theta)\in A\}\right)  \text{, where }x\in\lbrack0,\pi]\text{ and
}A\in\mathcal{B}\left(  [0,\pi]\right)  . \label{eq probability kernel}%
\end{equation}
We say that $\nu$ is {\bfseries\itshape invariant} with respect to $\left\{
\Psi_{x}:[0,\pi]\rightarrow\lbrack0,\pi]~|~x\in\Omega\right\}  $ if
\[
\nu(A)=\int_{[0,\pi]}K(\theta,A)d\nu(\theta)\text{ for any }A\in
\mathcal{B}\left(  [0,\pi]\right)
\]
and that the random billiard is {\bfseries\itshape ergodic} with respect to
$\nu$ if $T$ is ergodic with respect to $\lambda\otimes\nu$. For example,
$\mu(A)=%
\frac12
\int_{A}\sin\theta d\theta$, $A\in\mathcal{B}\left(  [0,\pi]\right)  $, is an
invariant measure because $\lambda\otimes\mu$ is invariant with respect to the
first return map $T$ (\cite[Proposition 2.1]{feres random walks}). Moreover,
since dispersive billiards are ergodic with respect to the Liouville measure,
the corresponding random billiard is also ergodic with respect to $\mu$ (Lemma
\ref{lemma ergodicity first return}). In this situation, Birkhoff's Ergodic
Theorem implies that
\begin{equation}
\frac{1}{n}\sum_{i=1}^{n}\mathbf{1}_{A}\left(  \pi_{2}\circ T^{n}\right)
\underset{n\rightarrow\infty}{\longrightarrow}%
\frac12
\int_{A}\sin(\theta)d\theta\text{~~a.s.,} \label{eq 18}%
\end{equation}
where $\pi_{2}:\overline{pq}\times\lbrack0,\pi]\rightarrow\lbrack0,\pi]$ is
the projection onto the second factor. That is, the average number of
particles reflected with some angle $\theta\in A$ converges to $%
\frac12
\int_{A}\sin(\theta)d\theta$ so (weak) Knudsen's law holds for dispersive billiards.

In general, for non-dispersive billiards, little is know about the ergodicity
of the first return map. This is indeed the case of the random billiard
introduced in Section \ref{section model} (Fig. \ref{fig billiard table}).
Recall that, as far as we know, determining whether a general irrational
triangular billiard is ergodic is still an open problem (\cite[Section
7]{gorodnik open problmes}) and only a few particular examples were proved to
be ergodic (\cite{vorobets}). Therefore, at this point, it is not clear at all
if Knudsen's law holds for the pipeline model introduced in Section
\ref{section model}. To prove it, the first return map must be replaced with a
more convenient representation.

\section{Skew-Type Representation of Random Maps\label{section random maps}}

Given a random billiard, the probability kernel (\ref{eq probability kernel})
contains all the dynamical information of the system and determines its
properties. This implies that, as far as the dynamics is concerned, the
underlying probability space $\Omega$ (which in Section
\ref{section random billiards} was equal to $\left(  \overline{pq}%
,\mathcal{B}(\overline{pq}),\lambda\right)  $) plays a secondary role. In
particular, other probability spaces $\left(  \Omega,\mathcal{F},P\right)  $
giving rise to the same probability kernel are available. Indeed, if
$K:X\times\mathcal{B}\rightarrow\lbrack0,1]$ is a transition probability
kernel on a measurable space $\left(  X,\mathcal{B}\right)  $, Kolmogorov's
Existence Theorem on Markov processes guarantees that there exists a
probability $P$ defined on the Borel $\sigma$-algebra of the topological space
$\Omega=%
{\textstyle\prod\nolimits_{i=1}^{\infty}}
X_{i}$, $X_{i}:=X$ for all $i\geq1$, such that the chain $\Phi_{i}:=\pi_{i}$
is Markovian and has transition probability kernel $K$ (\cite[Theorem
2.11]{blumenthal book}), where $\pi_{i}:%
{\textstyle\prod\nolimits_{i=1}^{\infty}}
X_{i}\rightarrow X$ is the projection onto the $i$th factor. In this
statement, $X$ has to be a $\sigma$-compact Hausdorff space and $\mathcal{B}$
the Borel $\sigma$-algebra. Unfortunately, $%
{\textstyle\prod\nolimits_{i=1}^{\infty}}
X_{i}$ is not very manageable to work with explicitly. For instance, the
representation $T:\overline{pq}\times\lbrack0,\pi]\rightarrow\overline
{pq}\times\lbrack0,\pi]$ given by the first return map introduced in Section
\ref{section random billiards} only involves finite dimensional spaces and
hence seems more convenient. We are now going to consider another skew-type
representation for random maps recently introduced in \cite{bashoun}. As we
will show in Section \ref{section exactness of S}, this representation will be
crucial to prove the asymptotic properties of the random map defined by our
random billiard Fig. \ref{fig billiard table}.

Let $\left(  X,\mathcal{B},\nu\right)  $ be a general measure space and let
$\left(  [0,1),\mathcal{B}\left(  [0,1)\right)  ,\lambda\right)  $ be the unit
interval regarded as a probability space, where $\lambda$ stands for the
Lebesgue measure. Later on, we will apply the results of this section to
$X=[0,\pi]$. For any $k=1,...,N$, let $\tau_{k}:X\rightarrow X$ and
$p_{k}:X\rightarrow\lbrack0,1]$ be measurable mappings such that
$\{p_{k}\}_{k=1,...,N}$ is a measurable partition of the unity, i.e.,
$\sum_{k=1}^{N}p_{k}(x)=1$ for any $x\in X$ (compare to
(\ref{eq probabilities model}) and (\ref{eq tau maps}) in Section
\ref{section model}). We define the {\bfseries\itshape random dynamical
system} $\tau:X\rightarrow X$ such that $\tau\left(  x\right)  =\tau
_{i}\left(  x\right)  $ with probability $p_{i}\left(  x\right)  $. The
transition probability kernel of $\tau$ is given by%
\begin{equation}
K\left(  x,A\right)  =\sum_{i=1}^{N}p_{i}\left(  x\right)  \mathbf{1}%
_{A}\left(  \tau_{i}(x)\right)  .\label{eq 16}%
\end{equation}
This probability kernel defines the evolution of an initial distribution
(probability measure) $\nu$ on $\left(  X,\mathcal{B}\right)  $ under the
random map $\tau$ iteratively as%
\begin{equation}
\nu^{(0)}:=\nu,~~\nu^{(n+1)}(A)=\int_{[0,\pi]}K(\theta,A)d\nu^{(n)}%
(\theta),~~n\geq1,\label{eq 15}%
\end{equation}
where $A\in\mathcal{B}$. A measure $\nu$ is called {\bfi invariant} if
$\nu^{(1)}=\nu$.

Following \cite{bashoun}, consider now $\Omega=[0,1)\times X$ and set
$J_{k}:=\{(y,x)\in\Omega~%
\vert
~\sum_{i<k}p_{i}(x)\leq y<\sum_{i\leq k}p_{i}(x)\}$ such that $\{J_{k}%
\}_{k=1,...,N}$ is a finite measurable partition of $\Omega$. We define the
{\bfseries\itshape skew-type representation} of the random map $\tau$ as the
map $S:[0,1)\times X\rightarrow\lbrack0,1)\times X$ such that%
\begin{equation}
S\left(  y,x\right)  =\left(  \varphi_{k}(y,x),\tau_{k}(x)\right)  \text{ for
}\left(  y,x\right)  \in J_{k} \label{eq 12}%
\end{equation}
where%
\[
\varphi_{k}\left(  y,x\right)  =\frac{1}{p_{k}(x)}\left(  y-\sum_{i=1}%
^{k-1}p_{i}(x)\right)  .
\]
The map $\varphi_{k}$ is well defined because if $\left(  y,x\right)  \in
J_{k}$ then $p_{k}(x)>0$ necessarily. Moreover, $S$ thus defined is
$\mathcal{B}\left(  [0,1)\right)  \otimes\mathcal{B}$-measurable (\cite[Lemma
3.1]{bashoun}) and it is a skew-type representation of the random dynamical
system $\tau$.

As we will see, the map $S$ is extremely useful to study the properties of
$\tau$. For example, $\nu$ is an invariant measure of $\tau$ on $\left(
X,\mathcal{B}\right)  $ if and only if $\lambda\otimes\nu$ is an invariant
measure of $S$ on $\left(  [0,1)\times X,\mathcal{B}\left(  [0,1)\right)
\otimes\mathcal{B},\lambda\otimes\nu\right)  $ (\cite[Lemma 3.2]{bashoun}).
Moreover, since we know that the sine $\mu(A)=%
\frac12
\int_{A}\sin(\theta)d\theta$ is invariant by the transition probability kernel
(\ref{eq 16}) of our billiard table, we conclude that $\mu$ is invariant by
$\tau$ built from (\ref{eq probabilities model}) and (\ref{eq tau maps}), and
therefore $\lambda\otimes\mu$ is invariant by the corresponding map $S$. In an
abuse of terminology, we will continue calling $\lambda\otimes\mu$ the
{\bfseries\itshape Liouville measure when referring to }$S$. In this context,
we will say that the random map $\tau$ with initial distribution $\nu$ is
{\bfseries\itshape ergodic} (resp. {\bfseries\itshape mixing},
{\bfseries\itshape exact}) if $S$ is ergodic (resp. mixing, exact) with
respect to $\lambda\otimes\nu$. In Section \ref{section exactness of S}, we
are going to prove that the $S$ associated to (\ref{eq probabilities model})
and (\ref{eq tau maps}) is exact.%

\begin{figure}
[h]
\begin{center}
\includegraphics[
natheight=2.153400in,
natwidth=2.674000in,
height=2.1923in,
width=2.7155in
]%
{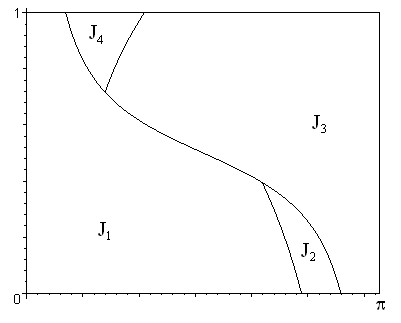}%
\caption{The sets $J_{i}$, $i=1,...,4$, for the random dynamical system
defined in Section \ref{section model}.}%
\label{fig S slices}%
\end{center}
\end{figure}

Unlike the first return map $T$ (Section \ref{section random billiards}), the
map $S$ has no dynamical interpretation and is a purely auxiliary tool to
represent and study $\tau$. Therefore, we need to establish the relationship
between $S$ and the transition probability kernel (\ref{eq 16}), which carries
all the dynamical information. This is the content of Theorem \ref{theorem 1}.
Before stating that relationship, we need to introduce some notation and an
auxiliary lemma, whose proof is included separately in the Appendix for the
sake of a clearer exposition.

\begin{definition}
\label{definition slices}We define iteratively the sets $J_{i_{1}\cdots i_{n}%
}$, where $i_{j}\in\{1,...,N\}$ for any $j=1,...,n$, as%
\begin{equation}
J_{i_{1}\cdots i_{n}}:=\left\{  \omega\in J_{i_{n}}~|~S\left(  \omega\right)
\in J_{i_{n-1}},S^{2}\left(  \omega\right)  \in J_{i_{n-2}},....,S^{n-1}%
(\omega)\in J_{i_{1}}\right\}  . \label{eq 27}%
\end{equation}

\end{definition}

\begin{lemma}
\label{lemma slices go to zero}Let $I_{x}:=[0,1)\times\{x\}$ the fibre through
$x\in X$. \ If $I_{x}\bigcap J_{i_{1}\cdots i_{n}}\neq\emptyset$, then%
\[
\lambda\left(  \pi_{1}\left(  I_{x}\bigcap J_{i_{1}\cdots i_{n}}\right)
\right)  =p_{i_{n}}(x)p_{i_{n-1}}\left(  \tau_{i_{n}}(x)\right)  \cdots
p_{i_{1}}(\tau_{i_{2}}\circ\cdots\circ\tau_{i_{n}}(x)).
\]

\end{lemma}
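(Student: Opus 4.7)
The plan is to prove this by induction on $n$, exploiting the piecewise-affine structure of $S$ restricted to each fibre.

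For the base case $n=1$, the definition of $J_k$ immediately gives
$$
\pi_{1}\bigl(I_{x}\cap J_{i_{1}}\bigr)=\Bigl[\textstyle\sum_{i<i_{1}}p_{i}(x),\ \sum_{i\le i_{1}}p_{i}(x)\Bigr),
$$
whose Lebesgue measure is $p_{i_{1}}(x)$. This already illustrates the key structural observation, namely that on any fibre $I_{x}$ the partition induced by $\{J_{k}\}$ is just the subdivision of $[0,1)$ into consecutive intervals of lengths $p_{1}(x),\ldots,p_{N}(x)$.

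For the inductive step, I would first recast the definition of $J_{i_{1}\cdots i_{n}}$ in a recursive form: a point $\omega$ lies in $J_{i_{1}\cdots i_{n}}$ if and only if $\omega\in J_{i_{n}}$ and $S(\omega)\in J_{i_{1}\cdots i_{n-1}}$ (where on the right-hand side we use the same definition for the shorter word, which one checks matches after shifting indices by one). Hence
$$
I_{x}\cap J_{i_{1}\cdots i_{n}}=\bigl(I_{x}\cap J_{i_{n}}\bigr)\cap S^{-1}\bigl(J_{i_{1}\cdots i_{n-1}}\bigr).
$$
Now comes the decisive geometric point. The second coordinate of $S(y,x)$ for $(y,x)\in I_{x}\cap J_{i_{n}}$ is the constant $\tau_{i_{n}}(x)$, so $S$ sends the set $I_{x}\cap J_{i_{n}}$ into the fibre $I_{\tau_{i_{n}}(x)}$. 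On the first coordinate, $y\mapsto\varphi_{i_{n}}(y,x)$ is an affine bijection from $\bigl[\sum_{i<i_{n}}p_{i}(x),\sum_{i\le i_{n}}p_{i}(x)\bigr)$ onto $[0,1)$ with constant slope $1/p_{i_{n}}(x)$. Consequently, for any measurable $A\subseteq[0,1)\times X$,
$$
\lambda\bigl(\pi_{1}(I_{x}\cap J_{i_{n}}\cap S^{-1}(A))\bigr)=p_{i_{n}}(x)\,\lambda\bigl(\pi_{1}(I_{\tau_{i_{n}}(x)}\cap A)\bigr).
$$

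Applying this identity with $A=J_{i_{1}\cdots i_{n-1}}$ and invoking the inductive hypothesis on the fibre $I_{\tau_{i_{n}}(x)}$ yields
$$
\lambda\bigl(\pi_{1}(I_{x}\cap J_{i_{1}\cdots i_{n}})\bigr)=p_{i_{n}}(x)\cdot p_{i_{n-1}}(\tau_{i_{n}}(x))\cdots p_{i_{1}}(\tau_{i_{2}}\circ\cdots\circ\tau_{i_{n}}(x)),
$$
which is the claim. The non-emptiness hypothesis $I_{x}\cap J_{i_{1}\cdots i_{n}}\neq\emptyset$ is needed only to guarantee that every factor $p_{i_{k}}(\tau_{i_{k+1}}\circ\cdots\circ\tau_{i_{n}}(x))$ is strictly positive, so that each $\varphi_{i_{k}}$ is well-defined on the relevant fibre at every stage of the induction; this is automatic from the fact that each $S^{n-k}(\omega)\in J_{i_{k}}$ forces $p_{i_{k}}(\pi_{2}(S^{n-k}(\omega)))>0$. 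The only real obstacle is careful bookkeeping of how the word $i_{1}\cdots i_{n}$ is consumed from the right under iteration of $S$; once that is laid out, the proof is essentially a one-line change-of-variables argument iterated $n$ times.
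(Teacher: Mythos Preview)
Your proof is correct and follows essentially the same approach as the paper's: both establish the recursion $\lambda(\pi_1(I_x\cap J_{i_1\cdots i_n}))=p_{i_n}(x)\,\lambda(\pi_1(I_{\tau_{i_n}(x)}\cap J_{i_1\cdots i_{n-1}}))$ via the affine change of variables $\varphi_{i_n}(\cdot,x)$ on the fibre and then iterate. The only cosmetic difference is that you phrase the recursion through $S^{-1}$ and a formal induction, whereas the paper first shows $S(I_x\cap J_{i_1\cdots i_n})=I_{\tau_{i_n}(x)}\cap J_{i_1\cdots i_{n-1}}$ and then applies the change of variables directly; the content is identical.
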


\begin{theorem}
\label{theorem 1}Let $\nu$ be a measure on $\left(  X,\mathcal{B}\right)  $
and $A\in\mathcal{B}$. Then,%
\[
\operatorname*{E}\nolimits_{\lambda\otimes\nu}\left[  \mathbf{1}_{A}\left(
\pi_{2}\circ S^{n}\right)  \right]  :=\int_{[0,1)\times X}\mathbf{1}%
_{A}\left(  \pi_{2}\circ S^{n}\right)  d\left(  \lambda\otimes\nu\right)
=\nu^{(n)}\left(  A\right)  .
\]

\end{theorem}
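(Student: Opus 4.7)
The plan is to evaluate the left-hand side explicitly by decomposing the integration domain according to the partition $\{J_{i_1\cdots i_n}\}$ of Definition \ref{definition slices}, which records the sequence of branches of $\tau$ selected during $n$ iterations of the skew product $S$. The first step is to observe, directly from (\ref{eq 12}) and (\ref{eq 27}), that on the set $J_{i_1\cdots i_n}$ one has
$$\pi_2 \circ S^n(y,x) = \tau_{i_1}\circ\tau_{i_2}\circ\cdots\circ\tau_{i_n}(x).$$
Indeed, $(y,x)\in J_{i_n}$ forces $\pi_2\circ S(y,x)=\tau_{i_n}(x)$; then $S(y,x)\in J_{i_{n-1}}$ forces $\pi_2\circ S^2(y,x)=\tau_{i_{n-1}}(\tau_{i_n}(x))$, and so on. This is the key structural fact that makes $S$ a faithful representation of the random composition.

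Splitting $[0,1)\times X$ as the disjoint union of the $J_{i_1\cdots i_n}$ and applying Fubini's theorem on each piece, the integral becomes
$$\sum_{i_1,\ldots,i_n} \int_X \mathbf{1}_A\bigl(\tau_{i_1}\circ\cdots\circ\tau_{i_n}(x)\bigr)\, \lambda\bigl(\pi_1(I_x \cap J_{i_1\cdots i_n})\bigr)\, d\nu(x).$$
At this point I would invoke Lemma \ref{lemma slices go to zero} to replace the inner Lebesgue measure by the telescoping product $p_{i_n}(x)\,p_{i_{n-1}}(\tau_{i_n}(x))\cdots p_{i_1}(\tau_{i_2}\circ\cdots\circ\tau_{i_n}(x))$ (the lemma's hypothesis $I_x\cap J_{i_1\cdots i_n}\neq\emptyset$ is harmless, since vanishing slices contribute zero and correspond exactly to vanishing factors $p_{i_k}$).

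The remaining task is to identify the resulting sum with $\nu^{(n)}(A)$. The natural route is induction on $n$: the case $n=0$ is immediate since $\pi_2\circ S^0 = \pi_2$ and the first marginal of $\lambda\otimes\nu$ is trivial. For the step $n\mapsto n+1$ one uses the recursion (\ref{eq 15}) together with the explicit kernel (\ref{eq 16}), writing
$$\nu^{(n+1)}(A) = \int_X \sum_{j=1}^N p_j(z)\, \mathbf{1}_A(\tau_j(z))\, d\nu^{(n)}(z),$$
substituting the inductive hypothesis, and relabelling the new outermost index $j$ as $i_0$ to recover the expression obtained above with $n+1$ in place of $n$.

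I expect the only real obstacle to be bookkeeping: the ordering convention of Definition \ref{definition slices} is reversed relative to the order in which the maps $\tau_{i_k}$ are actually composed, so one must be careful that the innermost map in the composition corresponds to the partition element containing $(y,x)$ itself while the outermost corresponds to the partition element containing $S^{n-1}(y,x)$. Once this indexing is fixed, the measure-theoretic content is routine and the identity follows.
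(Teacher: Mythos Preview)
Your proposal is correct and follows essentially the same route as the paper: decompose over the partition $\{J_{i_1\cdots i_n}\}$, identify $\pi_2\circ S^n$ with $\tau_{i_1}\circ\cdots\circ\tau_{i_n}$ on each piece, apply Fubini together with Lemma~\ref{lemma slices go to zero}, and then match the resulting sum with $\nu^{(n)}(A)$. The only cosmetic difference is that the paper first expands $\nu^{(n)}(A)$ directly by iterating the kernel (using a Dirac-delta shorthand) and then matches it against the integral, whereas you compute the integral first and then run an induction on $n$; the content is the same, and your remark about the reversed indexing convention is exactly the bookkeeping point that needs attention.
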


\begin{proof}
First of all, it is not difficult to check that%
\[
\nu^{(n)}\left(  A\right)  =\sum_{i_{1},...,i_{n}}\int dv\left(  x\right)
p_{i_{n}}\left(  x\right)  p_{i_{n-1}}\left(  \tau_{i_{n}}\left(  x\right)
\right)  \cdots p_{i_{1}}\left(  \tau_{i_{2}}\circ...\circ\tau_{i_{n}%
}(x)\right)  \mathbf{1}_{A}\left(  \tau_{i_{1}}\circ...\circ\tau_{i_{n}%
}(x)\right)  .
\]
This can be proved iteratively rewriting $K$ in terms of (auxiliary) Dirac
deltas, $K\left(  x,dy\right)  =\sum_{i=1}^{K}p_{i}\left(  x\right)
\delta\left(  y-\tau_{k}\left(  x\right)  \right)  dy$ where $\int f\left(
y\right)  \delta\left(  y-a\right)  dy=f\left(  a\right)  $, $f\in C\left(
\mathbb{R}\right)  $.

On the other hand, since $[0,1)\times X=\bigcup\nolimits_{i_{1},...,i_{n}%
}J_{i_{1}...i_{n}}$ is the disjoint union of the sets $J_{i_{1}...i_{n}}$,
$i_{j}\in\{1,...,N\}$,%
\begin{align*}
\int_{\lbrack0,1)\times X}\mathbf{1}_{A}\left(  \pi_{2}\circ S^{n}\right)
d\left(  \lambda\otimes\nu\right)   &  =\sum_{i_{1},...,i_{n}}\int
_{J_{i_{1}...i_{n}}}\mathbf{1}_{A}\left(  \pi_{2}\circ S^{n}\right)  d\left(
\lambda\otimes\nu\right) \\
&  =\sum_{i_{1},...,i_{n}}\int_{J_{i_{1}...i_{n}}}\mathbf{1}_{A}\left(
\tau_{i_{1}}\circ...\circ\tau_{i_{n}}\right)  d\left(  \lambda\otimes
\nu\right) \\
&  =\sum_{i_{1},...,i_{n}}\int_{[0,1)\times X}\mathbf{1}_{J_{i_{1}...i_{n}}%
}\mathbf{1}_{A}\left(  \tau_{i_{1}}\circ...\circ\tau_{i_{n}}\right)  d\left(
\lambda\otimes\nu\right)  ,
\end{align*}
where, in the second line, we have used that $\mathbf{1}_{A}\left(  \pi
_{2}\circ S^{n}\right)  =\mathbf{1}_{A}\left(  \tau_{i_{1}}\circ...\circ
\tau_{i_{n}}\right)  $ on the set $J_{i_{1}...i_{n}}$. If now we apply
Fubini's theorem,%
\[
\int_{\lbrack0,1)\times X}\mathbf{1}_{J_{i_{1}...i_{n}}}\mathbf{1}_{A}\left(
\tau_{i_{1}}\circ...\circ\tau_{i_{n}}\right)  d\left(  \lambda\otimes
\nu\right)  =\int_{X}\mathbf{1}_{A}\left(  \tau_{i_{1}}\circ...\circ
\tau_{i_{n}}(x)\right)  dv\left(  x\right)  \int_{[0,1)}\mathbf{1}%
_{J_{i_{1}...i_{n}}}\left(  y,x\right)  d\lambda\left(  y\right)  ,
\]
but by Lemma \ref{lemma slices go to zero}, for any fixed $x\in X$,%
\[
\int_{\lbrack0,1)}\mathbf{1}_{J_{i_{1}...i_{n}}}\left(  y,x\right)
d\lambda\left(  y\right)  =p_{i_{n}}(x)\cdots p_{i_{1}}(\tau_{i_{2}}%
\circ\cdots\circ\tau_{i_{n}}(x)).
\]
Therefore,%
\[
\int_{\lbrack0,1)\times X}\mathbf{1}_{A}\left(  \pi_{2}\circ S^{n}\right)
d\left(  \lambda\otimes\nu\right)  =\sum_{i_{1},...,i_{n}}\int_{X}p_{i_{n}%
}\left(  x\right)  \cdots p_{i_{1}}\left(  \tau_{i_{2}}\circ...\circ
\tau_{i_{n}}(x)\right)  \mathbf{1}_{A}\left(  \tau_{i_{1}}\circ...\circ
\tau_{i_{n}}(x)\right)  dv\left(  x\right)  .
\]
\smallskip
\end{proof}

\begin{corollary}
\label{corollary koopman}Let $U_{S}:L^{p}\left(  [0,1)\times X,\lambda
\otimes\mu\right)  \rightarrow L^{p}\left(  [0,1)\times X,\lambda\otimes
\mu\right)  $, $p\geq1$, be the \textbf{Koopman operator} associated to $S$,
i.e., $U_{S}f=f\circ S$. Let $\pi_{2}:[0,1)\times X\rightarrow X$ be the
projection onto the second factor. If $\nu$ is a measure on $\left(
X,\mathcal{B}\right)  $, then%
\[
\nu^{(n)}\left(  A\right)  =\int_{[0,1)\times\lbrack0,\pi]}U_{S}^{n}\left(
1_{\pi_{2}^{-1}(A)}\right)  d\left(  \lambda\otimes\nu\right)  ,~A\in
\mathcal{B},~n\in\mathbb{N}.
\]
\smallskip
\end{corollary}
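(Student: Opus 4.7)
The plan is to derive this corollary as an immediate consequence of Theorem \ref{theorem 1}, once the indicator function $\mathbf{1}_{\pi_{2}^{-1}(A)}$ is rewritten as a pullback. First I would verify the elementary identity
\[
U_{S}^{n}f = f\circ S^{n}
\]
for any $f\in L^{p}\left([0,1)\times X,\lambda\otimes\mu\right)$, which follows by induction from the definition $U_{S}f=f\circ S$ (the only check being that iteration of the Koopman operator is well-defined as an $L^{p}$ operator because $S$ preserves $\lambda\otimes\mu$, a fact already established from the invariance of $\mu$ under $\tau$).

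Next, I would note the set-theoretic identity $\pi_{2}^{-1}(A)=\{(y,x)\in[0,1)\times X~|~x\in A\}$, so that
\[
\mathbf{1}_{\pi_{2}^{-1}(A)}(y,x) = \mathbf{1}_{A}(x) = \mathbf{1}_{A}(\pi_{2}(y,x)).
\]
Applying $U_{S}^{n}$ to both sides gives $U_{S}^{n}\mathbf{1}_{\pi_{2}^{-1}(A)} = \mathbf{1}_{A}\circ\pi_{2}\circ S^{n} = \mathbf{1}_{A}(\pi_{2}\circ S^{n})$.

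Finally, substituting this into the integral on the right-hand side of the claimed formula yields
\[
\int_{[0,1)\times X} U_{S}^{n}\!\left(\mathbf{1}_{\pi_{2}^{-1}(A)}\right) d(\lambda\otimes\nu) = \int_{[0,1)\times X}\mathbf{1}_{A}\left(\pi_{2}\circ S^{n}\right) d(\lambda\otimes\nu),
\]
which by Theorem \ref{theorem 1} is exactly $\nu^{(n)}(A)$. Since every step is either a definition or a direct appeal to Theorem \ref{theorem 1}, no genuine obstacle arises; the only minor point to be careful about is that the integration in the corollary is against $\lambda\otimes\nu$ (not $\lambda\otimes\mu$), but $\mathbf{1}_{\pi_{2}^{-1}(A)}$ and its iterates under $U_{S}$ are bounded measurable functions, so the integral is well-defined for any finite measure $\nu$ on $(X,\mathcal{B})$ regardless of the $L^{p}$ space used to define $U_{S}$.
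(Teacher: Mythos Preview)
Your proposal is correct and matches the paper's approach: the paper states this result as an immediate corollary of Theorem~\ref{theorem 1} with no separate proof, and your argument spells out precisely the one-line reduction the authors have in mind, namely rewriting $U_{S}^{n}\mathbf{1}_{\pi_{2}^{-1}(A)}=\mathbf{1}_{A}\circ\pi_{2}\circ S^{n}$ and invoking Theorem~\ref{theorem 1}. Your remark about integrability against $\lambda\otimes\nu$ versus $\lambda\otimes\mu$ is a reasonable extra care but not something the paper pauses on.
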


Let $S$ be the skew-type representation of the random billiard introduced in
Section \ref{section model} and let $\nu\ll\mu$ be a probability measure
absolutely continuous with respect to the Liouville measure $\mu$ with
Radon-Nikodym derivative $f\in L^{1}\left(  [0,\pi],\mu\right)  $. One can
easily check that $\frac{d\left(  \lambda\otimes\nu\right)  }{d\left(
\lambda\otimes\mu\right)  }=\pi_{2}^{\ast}(f)$, where $\pi_{2}^{\ast
}(f)\left(  \omega\right)  :=f\left(  \pi_{2}\left(  \omega\right)  \right)  $
and $\omega\in\lbrack0,1)\times\lbrack0,\pi]$. If $S$ is mixing (or exact)
then, from Corollary \ref{corollary koopman},
\begin{equation}
\int\pi_{2}^{\ast}\left(  f\right)  U_{S}^{n}\left(  1_{\pi_{2}^{-1}%
(A)}\right)  d\left(  \lambda\otimes\mu\right)  \underset{n\rightarrow\infty
}{\longrightarrow}\int\pi_{2}^{\ast}\left(  f\right)  d\left(  \lambda
\otimes\mu\right)  \int\mathbf{1}_{\pi_{2}^{-1}\left(  A\right)  }d\left(
\lambda\otimes\mu\right)  =\mu\left(  A\right)  \label{eq 17}%
\end{equation}
(\cite[Proposition 4.4.1 (b)]{Lasota}), which implies the strong Knudsen's
law. We will prove that $S$ is exact in Section \ref{section exactness of S}
and give more details about the strong law in Section
\ref{section knudsen's law}. The proof uses the fact that the pull-back
$\pi_{2}^{\ast}\left(  \mathbf{1}_{C}\right)  $ of a characteristic function
$\mathbf{1}_{C}$, $C\in\mathcal{B}\left(  \left[  0,\pi\right]  \right)  $,
cannot be invariant by $S^{2}:=S\circ S$ (Section
\ref{section ergodicity of SS}). It is worth observing that the strong law,
unlike the standard approach to random billiards available in the literature
from the first return map (see \ref{eq 18}), is a consequence of the
asymptotic properties of the skew-type representation $S$.

\section{Properties of $S^{2}$\label{section ergodicity of SS}}

Let $S$ be the skew-type representation of the random map (\ref{eq tau maps})
and (\ref{eq probabilities model}). In this section, we are going to show that
$S^{2}=S\circ S:[0,1)\times\lbrack0,\pi]\rightarrow\lbrack0,1)\times
\lbrack0,\pi]$ cannot leave invariant any characteristic function $\pi
_{2}^{\ast}\left(  \mathbf{1}_{C}\right)  $, where $C\in\mathcal{B}\left(
\left[  0,\pi\right]  \right)  $ has probability $0<\mu\left(  C\right)  <1$
and $\pi_{2}:[0,1)\times X\rightarrow X$ denotes the projection onto the
second factor. This result will be used in the next section to prove that $S$
is exact. First, we will show that our model exhibits a very useful symmetry.

\begin{proposition}
\label{prop symmetry}Let $S$ be the skew-type representation of the random map
(\ref{eq tau maps}) and (\ref{eq probabilities model}). Let $g:[0,\pi
]\rightarrow\mathbb{R}$ be a function such that $\pi_{2}^{\ast}(g)$ is $S^{2}%
$-invariant, i.e.,%
\begin{equation}
\pi_{2}^{\ast}(g)=\pi_{2}^{\ast}(g)\circ S^{2}, \label{eq 13}%
\end{equation}
and let $\phi:[0,\pi]\rightarrow\lbrack0,\pi]$ be defined by $\theta\mapsto
\pi-\theta$. Then $\pi_{2}^{\ast}(g\circ\phi)$ is also $S^{2}$-invariant.
\end{proposition}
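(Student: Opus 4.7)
The proof will exploit a discrete $\mathbb{Z}/2$-symmetry enjoyed by the model (\ref{eq tau maps})--(\ref{eq probabilities model}). Writing $\sigma$ for the involution $(1\,3)(2\,4)$ on indices, I would first establish the identities
\[
\phi\circ\tau_i\circ\phi \;=\; \tau_{\sigma(i)}, \qquad p_i\circ\phi \;=\; p_{\sigma(i)} \qquad (i=1,2,3,4).
\]
The $\tau$-conjugacies are one-line computations; for instance $\phi\tau_1\phi(\theta)=\pi-((\pi-\theta)+2\alpha)=\theta-2\alpha=\tau_3(\theta)$, and similarly $\phi\tau_2\phi=\tau_4$. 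The probability identities require a piecewise verification using (M2): the six breakpoints $\alpha,2\alpha,3\alpha,\pi-3\alpha,\pi-2\alpha,\pi-\alpha$ are $\phi$-symmetric, and the $\pi$-periodicity of $\tan$ gives $u_\alpha(\theta-\pi)=u_\alpha(\theta)$, so the sub-pieces of $p_1$ and $p_3\circ\phi$ (resp.\ of $p_2$ and $p_4\circ\phi$) match up case by case.

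Next I would rephrase the hypothesis pointwise. Because $\pi_2\circ S^2(y,x)=\tau_l(\tau_k(x))$ on $J_{lk}$, and Lemma \ref{lemma slices go to zero} gives $\lambda(\pi_1(I_x\cap J_{lk}))=p_k(x)\,p_l(\tau_k(x))$, the $S^2$-invariance of $\pi_2^{\ast}(g)$ is equivalent to the condition
\[
(\ast)\qquad g(\tau_l(\tau_k(x)))=g(x) \quad\text{whenever}\quad p_k(x)\,p_l(\tau_k(x))>0,
\]
holding for $\mu$-a.e.\ $x\in[0,\pi]$ and every pair $(k,l)$. Applying the symmetry yields
\[
\phi\bigl(\tau_l(\tau_k(x))\bigr) \;=\; \tau_{\sigma(l)}\bigl(\tau_{\sigma(k)}(\phi(x))\bigr),
\]
and the probability identity implies that $p_k(x)\,p_l(\tau_k(x))>0$ if and only if $p_{\sigma(k)}(\phi(x))\,p_{\sigma(l)}(\tau_{\sigma(k)}(\phi(x)))>0$. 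Invoking $(\ast)$ at the point $\phi(x)$ with indices $(\sigma(k),\sigma(l))$ then gives $g(\tau_{\sigma(l)}(\tau_{\sigma(k)}(\phi(x))))=g(\phi(x))$, which reads $(g\circ\phi)(\tau_l(\tau_k(x)))=(g\circ\phi)(x)$ for every valid pair. This is precisely condition $(\ast)$ with $g$ replaced by $g\circ\phi$, so $\pi_2^{\ast}(g\circ\phi)$ is $S^2$-invariant.

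The main obstacle is the piecewise verification of $p_i\circ\phi=p_{\sigma(i)}$, since (M2) splits each $p_i$ into four sub-intervals with distinct analytic expressions that must all be shown to interchange correctly under $\phi$. Once the symmetry is in place, the reduction to $(\ast)$ via Lemma \ref{lemma slices go to zero} and the logical manipulation above are routine. Finally, since the Liouville measure satisfies $\mu\circ\phi^{-1}=\mu$ (because $\sin(\pi-\theta)=\sin\theta$), the $\lambda\otimes\mu$-null set of exceptional $x$'s arising from the a.e.\ interpretation of the hypothesis is preserved by the change of variable $x\mapsto\phi(x)$, so the argument goes through in the a.e.\ sense without modification.
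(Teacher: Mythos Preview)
Your proposal is correct and follows essentially the same route as the paper: both establish the conjugacy relations $\phi\circ\tau_i\circ\phi=\tau_{\sigma(i)}$ and $p_i\circ\phi=p_{\sigma(i)}$ for the involution $\sigma=(1\,3)(2\,4)$, reformulate $S^2$-invariance of $\pi_2^\ast(g)$ as the pointwise condition $g(x)=g(\tau_j\tau_i(x))$ whenever $p_i(x)p_j(\tau_i(x))>0$, and then use the symmetry to transfer this condition to $g\circ\phi$. Your treatment is slightly more explicit in invoking Lemma~\ref{lemma slices go to zero} to justify the pointwise reformulation and in handling the a.e.\ null set under $\phi$, but these are refinements of the same argument rather than a different approach.
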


\begin{proof}
It is not difficult to realise from (\ref{eq probabilities model}) that%
\[
p_{1}\circ\phi=p_{3},~~p_{3}\circ\phi=p_{1},~\text{~}p_{2}\circ\phi
=p_{4},\text{ and }p_{4}\circ\phi=p_{2}%
\]
just using that the the function $u_{\alpha}(\theta)$ introduced in
(\ref{eq definition function u}) has period $\pi$. Furthermore,
straightforward computations shows that%
\[
\phi\circ\tau_{1}\circ\phi=\tau_{3},~~\phi\circ\tau_{3}\circ\phi=\tau
_{1},~~\phi\circ\tau_{2}\circ\phi=\tau_{4},\text{ and }\phi\circ\tau_{4}%
\circ\phi=\tau_{2}.
\]
Given an index $k\in\{1,...,4\}$, we define its conjugate index $\widetilde
{k}$ in the following manner:%
\[
\widetilde{1}=3,~~\widetilde{3}=1,~~\widetilde{2}=4,\text{ and }\widetilde
{4}=2.
\]

On the other hand, (\ref{eq 13}) is equivalent to%
\begin{equation}
g\left(  x\right)  =g\left(  \tau_{j}\left(  \tau_{i}(x)\right)  \right)
\text{ if }p_{i}\left(  x\right)  p_{j}\left(  \tau_{i}\left(  x\right)
\right)  >0. \label{eq 34}%
\end{equation}
We have to check that this property holds for $g\circ\phi$. So let
$x\in\lbrack0,\pi]$ such that $p_{i}\left(  x\right)  p_{j}\left(  \tau
_{i}\left(  x\right)  \right)  >0$. Since $p_{i}=p_{\widetilde{i}}\circ\phi$
and $\phi\circ\tau_{_{\widetilde{i}}}=\tau_{i}\circ\phi$ for any $i=1,...,4$,
we have that $p_{i}\left(  x\right)  p_{j}\left(  \tau_{i}\left(  x\right)
\right)  >0$ implies%
\[
\left.
\begin{array}
[c]{r}%
p_{i}\left(  x\right)  =p_{\widetilde{i}}\circ\phi\left(  x\right)  >0\\
p_{j}\left(  \tau_{i}\left(  x\right)  \right)  =p_{\widetilde{j}}\circ
\phi\circ\tau_{i}\left(  x\right)  =p_{\widetilde{j}}\circ\tau_{_{\widetilde
{j}}}\circ\phi\left(  x\right)  >0
\end{array}
\right\}  \Longrightarrow(p_{\widetilde{i}}\cdot(p_{\widetilde{j}}\circ
\tau_{_{\widetilde{j}}}))\circ\phi\left(  x\right)  >0.
\]
Therefore, using the invariance of $g$ expressed in (\ref{eq 34})%
\[
g\left(  \phi(x)\right)  =g\left(  \tau_{\widetilde{j}}\circ\tau
_{\widetilde{i}}\circ\phi(x)\right)  =g\left(  \tau_{\widetilde{j}}\circ
\phi\circ\tau_{i}(x)\right)  =\left(  g\circ\phi\right)  \left(  \tau_{j}%
\circ\tau_{i}(x)\right)  ,
\]
so $g\circ\phi$ also satisfies (\ref{eq 34}) and is $S^{2}$-invariant.
\end{proof}

\begin{proposition}
\label{prop g invariant S2 and phi}Let $g\in L^{1}\left(  [0,\pi],\mu\right)
$ and suppose that $\pi_{2}^{\ast}(g)$ is invariant by $S^{2}$ and $\phi$. If
$\alpha$ in (\ref{eq probabilities model}) and (\ref{eq tau maps}) is
irrational, then $g$ is constant.
\end{proposition}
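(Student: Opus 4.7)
The plan is to push $g$ down to an $L^{1}$ function on the circle $\mathcal{C}:=\mathbb{R}/4\alpha\mathbb{Z}$ and exhibit an irrational rotation of $\mathcal{C}$ under which it is invariant; ergodicity of the rotation then forces $g$ to be constant.

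First, using (\ref{eq 34}) with $i=j=1$, and noting from (\ref{eq probabilities model}) that $p_{1}>0$ exactly on $[0,\pi-2\alpha)$, I get $g(\theta+4\alpha)=g(\theta)$ for every $\theta\in[0,\pi-4\alpha)$. Iterating this identity along the chain $\theta,\theta+4\alpha,\theta+8\alpha,\ldots$ as long as the running point still lies in $[0,\pi-4\alpha)$, $g$ is seen to be constant on every fiber $\{\theta_{0}+4k\alpha:k\in\mathbb{Z}\}\cap[0,\pi)$, so $g$ descends to an $L^{1}$ function $\widehat{g}$ on $\mathcal{C}$ parametrized by $r:=\theta\bmod 4\alpha$. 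Setting $\rho:=\pi-4\alpha\lfloor\pi/(4\alpha)\rfloor\in[0,4\alpha)$, the congruence $(\pi-\theta)\equiv(\rho-\theta)\pmod{4\alpha}$ makes the $\phi$-symmetry descend to the involution $\phi^{\ast}:r\mapsto(\rho-r)\bmod 4\alpha$ of $\mathcal{C}$.

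Two further identities follow from (\ref{eq 34}): the pairs $(i,j)\in\{(1,4),(4,3)\}$ give $g(\theta)=g(2\alpha-\theta)$ on $[0,\alpha)\cup(\alpha,2\alpha]$, while $(i,j)\in\{(4,1),(3,4)\}$ give $g(\theta)=g(6\alpha-\theta)$ on $(\alpha,3\alpha)\cup(3\alpha,5\alpha)$. Tracking how these reflections relabel fibers (in particular, $\theta\in[4\alpha,5\alpha)$ has fiber index $\theta-4\alpha\in[0,\alpha)$, and $6\alpha-\theta\equiv2\alpha-\theta\pmod{4\alpha}$), both identities collapse into a single involution $\iota:r\mapsto(2\alpha-r)\bmod 4\alpha$ of $\mathcal{C}$. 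A direct calculation then gives $(\iota\circ\phi^{\ast})(r)=(2\alpha-\rho)+r\bmod 4\alpha$, so $\widehat{g}$ is invariant under rotation of $\mathcal{C}$ by the angle $2\alpha-\rho$.

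Since $\alpha/\pi$ is irrational, $\rho/(4\alpha)=\pi/(4\alpha)-\lfloor\pi/(4\alpha)\rfloor$ is irrational and so is $(2\alpha-\rho)/(4\alpha)$; this rotation is therefore ergodic with respect to normalized Lebesgue measure on $\mathcal{C}$, which forces the $L^{1}$ function $\widehat{g}$ to be a.e.\ constant, and hence $g$ is a.e.\ constant on $[0,\pi]$. The most delicate point is the gluing: one must confirm that the various partial-domain reflections produced by (\ref{eq 34}) genuinely assemble, after passing to fibers, into one globally defined involution of $\mathcal{C}$, rather than two disjoint partial identifications, so that the composition with $\phi^{\ast}$ is a bona fide circle rotation on which ergodic theory can be invoked.
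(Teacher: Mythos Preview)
Your argument is correct and takes a genuinely different route from the paper's. The paper works on the circle $[0,\pi]/_{0\sim\pi}$ of circumference $\pi$: it runs through five subintervals of $[0,\pi]$ using the compositions $\tau_{1}\circ\tau_{1}$, $\tau_{2}\circ\tau_{1}$, $\tau_{3}\circ\tau_{2}$, $\tau_{1}\circ\tau_{2}$, $\tau_{2}\circ\tau_{3}$ together with $\phi$, obtaining the piecewise relation
\[
[g]([x])=\begin{cases}[g]([x+4\alpha])&x\in(0,\pi-4\alpha),\\ [g]([x+6\alpha])&x\in[\pi-4\alpha,\pi),\end{cases}
\]
and then iterates this to produce a single rotation by $(4k+6)\alpha$ for suitable $k$. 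You instead quotient by the $4\alpha$-periodicity coming from $\tau_{1}^{2}$, landing on the circle $\mathbb{R}/4\alpha\mathbb{Z}$, and use the compositions involving $\tau_{4}$ (which the paper never touches in this proof) to manufacture two explicit reflections of that circle whose composition is directly a rotation by $2\alpha-\rho$. Both arguments reduce to ergodicity of an irrational rotation, and the irrationality hypotheses coincide since $(2\alpha-\rho)/(4\alpha)=\tfrac{1}{2}-\pi/(4\alpha)+m$ is irrational exactly when $\alpha/\pi$ is. Your route is arguably slicker in that the rotation appears in one step rather than after iterating a piecewise identity; the paper's route has the advantage that its $\pi$-circle reappears naturally in the proof of Proposition~\ref{propositon characteristic function invariant}, where the antisymmetric variant $g=-g\circ\phi$ is analysed via the same piecewise relation with a sign flip. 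Your concern about the gluing is well placed but checks out: the two partial reflections $g(\theta)=g(2\alpha-\theta)$ on $[0,2\alpha]$ and $g(\theta)=g(6\alpha-\theta)$ on $[\alpha,5\alpha)$ cover, after reduction mod $4\alpha$, the ranges $[0,2\alpha]$ and $(2\alpha,4\alpha)$ respectively, so $\iota$ is globally defined on $\mathcal{C}$.
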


\begin{proof}
Let $\left.  [0,\pi]\right/  _{0\sim\pi}$ be the quotient space obtained by
identifying $0\sim\pi$. This space is homeomorphic to the unit circle
$\mathbb{S}^{1}$. The function $g:[0,\pi]\rightarrow\mathbb{R}$ induces a
function $[g]:\left.  [0,\pi]\right/  _{0\sim\pi}-\{[0]\}\rightarrow
\mathbb{R}$ on the quotient that is well defined except at $[0]$, the
equivalence class of $0$, because maybe $g(0)\neq g\left(  \pi\right)  $. We
are going to prove that $[g]$ is invariant by an irrational rotation. Since
constants are the only integrable functions invariant by irrational rotations,
$[g]$ must be constant and $g$ too.

Recall that $\pi_{2}^{\ast}\left(  g\right)  =\pi_{2}^{\ast}\left(  g\right)
\circ S^{2}$ reads as%
\begin{equation}
g\left(  x\right)  =g\left(  \tau_{j}\left(  \tau_{i}(x)\right)  \right)
\text{ if }p_{i}\left(  x\right)  p_{j}\left(  \tau_{i}\left(  x\right)
\right)  >0. \tag{\ref{eq 34}}%
\end{equation}
We are going split the interval $\left[  0,\pi\right]  $ into subintervals
and, for each of them, we will explore the invariance properties of $g$
implied by (\ref{eq 34}).

\begin{description}
\item[(i)] If $x\in(0,\pi-4\alpha)$, then $p_{1}\left(  x\right)  p_{1}\left(
\tau_{1}\left(  x\right)  \right)  =p_{1}\left(  x\right)  p_{1}\left(
x+2\alpha\right)  >0$ so%
\[
g\left(  x\right)  =g\left(  \tau_{1}\left(  \tau_{1}(x)\right)  \right)
=g\left(  x+4\alpha\right)  .
\]

\item[(ii)] If $x\in\lbrack\pi-4\alpha,\pi-3\alpha)$, then $p_{1}\left(
x\right)  p_{2}\left(  \tau_{1}\left(  x\right)  \right)  =p_{1}\left(
x\right)  p_{2}\left(  x+2\alpha\right)  >0$. Writing $x=\pi-4\alpha+z$ with
$z\in\lbrack0,\alpha)$,%
\[
g\left(  \pi-4\alpha+z\right)  =g\left(  \tau_{2}\left(  \tau_{1}(\pi
-4\alpha+z)\right)  \right)  =g\left(  \pi-2\alpha-z\right)  .
\]
Now, since $g$ is $\phi$-invariant,%
\[
g\left(  \pi-2\alpha-z\right)  =g\left(  \phi\left(  \pi-2\alpha-z\right)
\right)  =g\left(  2\alpha+z\right)
\]
Therefore,%
\[
\left[  g\right]  \left(  \left[  x\right]  \right)  =\left[  g\right]
\left(  \left[  x+6\alpha\right]  \right)  \text{ if }x\in\lbrack\pi
-4\alpha,\pi-3\alpha).
\]

\item[(iii)] If $x\in\lbrack\pi-3\alpha,\pi-2\alpha)$ then $p_{2}\left(
x\right)  p_{3}\left(  \tau_{2}\left(  x\right)  \right)  >0$. Writing
$x=\pi-2\alpha-z$ with $z\in\lbrack0,\alpha)$,%
\[
g\left(  \pi-2\alpha-z\right)  =g\left(  \tau_{3}\left(  \tau_{2}(\pi
-2\alpha-z)\right)  \right)  =g\left(  \pi-4\alpha+z\right)  .
\]
Using $g=g\circ\phi$,%
\[
g\left(  \pi-4\alpha+z\right)  =g\left(  \phi\left(  \pi-4\alpha+z\right)
\right)  =g\left(  4\alpha-z\right)  .
\]
That is,%
\[
\left[  g\right]  \left(  \left[  x\right]  \right)  =\left[  g\right]
\left(  \left[  x+6\alpha\right]  \right)  \text{ if }x\in\lbrack\pi
-3\alpha,\pi-2\alpha).
\]

\item[(iv)] Suppose $x\in\lbrack\pi-2\alpha,\pi-\alpha)$, $x=\pi-2\alpha+z$
with $z\in\lbrack0,\alpha)$. Since $p_{2}\left(  x\right)  p_{1}\left(
\tau_{2}\left(  x\right)  \right)  >0$ on $[\pi-2\alpha,\pi-\alpha)$, by
(\ref{eq 34}),%
\[
g\left(  x\right)  =g\left(  \tau_{2}\circ\tau_{1}(x)\right)  ,~~\tau_{2}%
\circ\tau_{1}(x)=\pi-z.
\]
Since $g$ is invariant by $\phi$,%
\[
g(\pi-z)=g\left(  \phi\left(  \pi-z\right)  \right)  =g\left(  z\right)
,~~z\in\lbrack0,\alpha).
\]
Summarising, $g\left(  \pi-2\alpha+z\right)  =g(z)$ which implies%
\[
\lbrack g]\left(  [x]\right)  =[g]\left(  [x+2\alpha]\right)  \text{ if }%
x\in\lbrack\pi-2\alpha,\pi-\alpha).
\]
Now $z\in\lbrack0,\alpha)$ and we can use the $S^{2}$-invariance expressed in
the first item so that%
\[
\left[  g\right]  \left(  \left[  x\right]  \right)  =\left[  g\right]
\left(  \left[  x+2\alpha\right]  \right)  =\left[  g\right]  \left(  \left[
x+6\alpha\right]  \right)  .
\]

\item[(v)] Finally, let $x\in\lbrack\pi-\alpha,\pi)$, $x=\pi-\alpha+z$ with
$z\in\lbrack0,\alpha)$. Since $p_{3}\left(  x\right)  p_{2}\left(  \tau
_{3}\left(  x\right)  \right)  >0$ on $[\pi-\alpha,\pi)$%
\[
g\left(  x\right)  =g\left(  \tau_{2}\circ\tau_{3}(x)\right)  ,~~\tau_{2}%
\circ\tau_{3}(x)=\pi-\alpha-z.
\]
by (\ref{eq 34}). Using the $\phi$-invariance of $g$,%
\[
g(\pi-\alpha-z)=g\left(  \phi\left(  \pi-\alpha-z\right)  \right)  =g\left(
z+\alpha\right)  .
\]
That is, $g\left(  x\right)  =g(z+\alpha)$, which again implies%
\[
\lbrack g]\left(  [x]\right)  =[g]\left(  [x+2\alpha]\right)  \text{ if }%
x\in\lbrack\pi-\alpha,\pi).
\]
Now $x+2\alpha\in\lbrack\alpha,2\alpha)$ and $2\alpha<\pi-4\alpha$ as by
assumption $\alpha<\pi/6$; we are in the situation of (i) so%
\[
\left[  g\right]  \left(  \left[  x\right]  \right)  =\left[  g\right]
\left(  \left[  x+2\alpha\right]  \right)  =\left[  g\right]  \left(  \left[
x+6\alpha\right]  \right)  .
\]

\end{description}

In conclusion, if $g$ is invariant by $S^{2}$ and $\phi$, then%
\begin{equation}
\left[  g\right]  \left(  \left[  x\right]  \right)  =\left\{
\begin{array}
[c]{l}%
\left[  g\right]  \left(  \left[  x+4\alpha\right]  \right)  \text{ \ if }%
x\in(0,\pi-4\alpha)\\
\left[  g\right]  \left(  \left[  x+6\alpha\right]  \right)  \text{ \ if }%
x\in\lbrack\pi-4\alpha,\pi).
\end{array}
\right.  \label{eq 31}%
\end{equation}
Let $k\in\mathbb{N}$ such that $\left(  4k+6\right)  \alpha>\pi>\left(
4\left(  k-1\right)  +6\right)  \alpha$. Observe that $k\geq1$ because
$\alpha<\pi/6$. Applying the invariance expressed in (\ref{eq 31})
iteratively, we have%
\begin{equation}
\left[  g\right]  \left(  \left[  x\right]  \right)  =\left[  g\right]
\left(  \left[  x+\left(  4k+6\right)  \alpha\right]  \right)  \text{ for any
}x\in\left(  0,\pi\right)  . \label{eq 39}%
\end{equation}
In other words, $\left[  g\right]  $ is invariant by the rotation of angle
$\left(  4k+6\right)  \alpha$. If $\alpha/2\pi$ is irrational, $g$ must be constant.
\end{proof}

\begin{proposition}
\label{propositon characteristic function invariant}Let $C\in\mathcal{B}%
\left(  [0,\pi]\right)  $ and suppose that $\pi_{2}^{\ast}(\mathbf{1}_{C})$ is
$S^{2}$-invariant. If $\alpha$ in (\ref{eq probabilities model}) and
(\ref{eq tau maps}) is irrational, then $\mu\left(  C\right)  $ equals either
$1$ or $0$.
\end{proposition}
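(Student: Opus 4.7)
Set $h:=\mathbf{1}_C+\mathbf{1}_C\circ\phi$. By Proposition \ref{prop symmetry}, $\pi_2^{\ast}(\mathbf{1}_C\circ\phi)$ is $S^2$-invariant whenever $\pi_2^{\ast}(\mathbf{1}_C)$ is, so $\pi_2^{\ast}(h)$ is $S^2$-invariant as a sum of invariants, and $h$ is $\phi$-invariant by construction. Proposition \ref{prop g invariant S2 and phi} then forces $h$ to be a.e.\ constant, and since $h$ takes values in $\{0,1,2\}$, either $h\equiv 0$ (whence $\mathbf{1}_C\equiv 0$ a.e.\ and $\mu(C)=0$), or $h\equiv 2$ (whence $\mu(C)=1$), or $h\equiv 1$---the ``bad case'' $\mathbf{1}_C\circ\phi=1-\mathbf{1}_C$ a.e., in which $\mu(C)=\tfrac12$. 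The whole task reduces to excluding the bad case.

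To exclude it, I would revisit the $S^2$-invariance relation (\ref{eq 34}) applied to $g:=\mathbf{1}_C$, selecting the pairs $(i,j)$ for which no $\phi$-invariance is needed. The pair $(1,1)$ yields the $4\alpha$-periodicity $g(x+4\alpha)=g(x)$ on $(0,\pi-4\alpha)$, and the pair $(1,4)$ (where $\tau_4\circ\tau_1(x)=2\alpha-x$ and $p_1(x)p_4(\tau_1(x))>0$ on $[0,\alpha)$) yields $g(x)=g(2\alpha-x)$ on $[0,\alpha)$. Lifting $g$ from the bulk to the quotient circle $\mathbb{T}_{4\alpha}:=\mathbb{R}/4\alpha\mathbb{Z}$ via the periodicity, this second identity descends to the reflection $\widetilde{g}(s)=\widetilde{g}(2\alpha-s)$ on $\mathbb{T}_{4\alpha}$. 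The bad-case identity $g(x)+g(\pi-x)=1$, applied to $x\in(4\alpha,\pi-4\alpha)$ where both $x$ and $\pi-x$ lie in the bulk, descends (using $(\pi-x)\bmod 4\alpha=(r-x\bmod 4\alpha)\bmod 4\alpha$ with $r:=\pi\bmod 4\alpha$) to the flip-reflection
\[
\widetilde{g}(s)+\widetilde{g}(r-s\bmod 4\alpha)=1 \qquad \text{on } \mathbb{T}_{4\alpha}.
\]
Composing the two involutions gives $\widetilde{g}(s)+\widetilde{g}(s+2\alpha-r\bmod 4\alpha)=1$, so iterating once more yields the invariance of $\widetilde{g}$ under rotation by $2(2\alpha-r)\equiv -2r\pmod{4\alpha}$. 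Since $\alpha/(2\pi)$ is irrational, so is $r/(2\alpha)=\pi/(2\alpha)-2\lfloor\pi/(4\alpha)\rfloor$; hence this is an irrational rotation of $\mathbb{T}_{4\alpha}$, and by its ergodicity $\widetilde{g}$ is a.e.\ equal to some constant $c_0\in\{0,1\}$. But then the bad-case identity on $(4\alpha,\pi-4\alpha)$ forces $c_0+c_0=1$, which is impossible, so the bad case cannot occur.

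The main obstacle is carrying out this descent rigorously for the full range $\alpha\in(0,\pi/6)$: when $\alpha$ is close to $\pi/6$, the bulk interval $(4\alpha,\pi-4\alpha)$ becomes short or empty and the flip-reflection $\widetilde{g}(s)+\widetilde{g}(r-s)=1$ is not directly produced from bulk-to-bulk pairs. In that regime one must propagate the identity through the boundary region $[\pi-4\alpha,\pi)$ by using cases (ii)--(v) of the proof of Proposition \ref{prop g invariant S2 and phi} together with the bad-case substitution $g\circ\phi=1-g$; this is a bookkeeping extension of the same geometric principle and does not alter the conclusion.
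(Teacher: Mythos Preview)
Your reduction to the ``bad case'' $\mathbf{1}_C\circ\phi=1-\mathbf{1}_C$ is essentially the paper's own reduction (the paper uses $g=\mathbf{1}_C-\mu(C)$ and symmetrizes to get $g=-g\circ\phi$, which is the same statement after shifting by $1/2$). The divergence is in how you exclude the bad case.

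The paper does \emph{not} pass to the auxiliary circle $\mathbb{T}_{4\alpha}$. Instead it re-runs the entire case analysis (i)--(v) of Proposition~\ref{prop g invariant S2 and phi} with the anti-symmetry $g\circ\phi=-g$ in place of $g\circ\phi=g$; every time the earlier proof invoked $\phi$-invariance, the identity now picks up a minus sign. This yields, on the natural circle $[0,\pi]/(0\sim\pi)$ and for \emph{all} $\alpha<\pi/6$,
\[
[g]([x])=-[g]\bigl([x+(4k+6)\alpha]\bigr),\qquad x\in(0,\pi),
\]
so the irrational rotation $R_\beta$, $\beta=(4k+6)\alpha-\pi$, sends $C$ to $C^{c}$. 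A short Fourier argument then finishes: the coefficients of $\mathbf{1}_C$ satisfy $a_ne^{2in\beta}=-a_n$ for $n\neq 0$, and since $e^{2in\beta}\neq -1$ for irrational $\beta/\pi$, all $a_n$ with $n\neq 0$ vanish, contradicting $\mu(C)=1/2$.

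Your $\mathbb{T}_{4\alpha}$-route is a genuinely different idea, but as you yourself note it only runs cleanly when the bulk-to-bulk window $(4\alpha,\pi-4\alpha)$ is long enough to cover a full period (roughly $\alpha<\pi/12$). For larger $\alpha$ you defer to ``cases (ii)--(v) with the bad-case substitution $g\circ\phi=1-g$'', which is precisely the paper's argument---so the part you leave as bookkeeping is the part that carries the proof for general $\alpha$. There is also a smaller gap you glossed over: the single pair $(1,4)$ only yields the reflection $\widetilde g(s)=\widetilde g(2\alpha-s)$ for $s\in[0,\alpha)$, i.e.\ on half of $\mathbb{T}_{4\alpha}$; you would need to add a companion pair (e.g.\ $(4,1)$, giving $g(x)=g(6\alpha-x)$ on $[\alpha,3\alpha)$) to get it on the whole circle. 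In short, the paper's approach recycles Proposition~\ref{prop g invariant S2 and phi} wholesale and is uniform in $\alpha$, whereas your route introduces new machinery that ultimately has to fall back on the same case analysis.
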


\begin{proof}
Suppose that $0<\mu\left(  C\right)  <1$. We want to prove that $\pi_{2}%
^{\ast}(\mathbf{1}_{C})$ cannot be $S^{2}$-invariant. Since constants are
trivially $S^{2}$-invariant, we can subtract $\mu\left(  C\right)  $ from
$\mathbf{1}_{C}$ so that $\mathbf{1}_{C}-\mu\left(  C\right)  $ is still
$S^{2}$-invariant and has expectation $0$. Let $g:=\mathbf{1}_{C}-\mu\left(
C\right)  $ and define $\overline{g}:=\frac{1}{2}\left(  g+g\circ\phi\right)
$, which is clearly $\phi$-invariant because $\phi^{2}=\operatorname*{Id}$.
Since $\pi_{2}^{\ast}(g)$ is $S$-invariant, so are $\pi_{2}^{\ast}(g\circ
\phi)$ (Proposition \ref{prop symmetry}) and $\pi_{2}^{\ast}(\overline{g})$.
By Proposition \ref{prop g invariant S2 and phi}, $\overline{g}$ is constant
and equal to its expectation $\operatorname*{E}_{\mu}\left[  \overline
{g}\right]  $. But $\operatorname*{E}\nolimits_{\mu}\left[  g\circ\phi\right]
=\operatorname*{E}\nolimits_{\mu}\left[  g\right]  =0$ because $d\mu
=\sin\left(  x\right)  dx$ and $\sin\left(  x\right)  $ are invariant by
$\phi$. Therefore, $\operatorname*{E}_{\mu}\left[  \overline{g}\right]  =0$
and $\overline{g}$, which is constant, must be equal to $0$. That is,%
\begin{equation}
g=-g\circ\phi. \label{eq 25}%
\end{equation}

Since%
\[
g\left(  x\right)  =\mathbf{1}_{C}\left(  x\right)  -\mu\left(  C\right)
=\left\{
\begin{array}
[c]{l}%
1-\mu\left(  C\right)  >0\text{ if }x\in C\\
-\mu\left(  C\right)  <0\text{ if }x\notin C,
\end{array}
\right.
\]
(remember that we assumed $0<\mu\left(  C\right)  <1$), we conclude from
(\ref{eq 25}) that $1-\mu\left(  C\right)  =-\left(  -\mu\left(  C\right)
\right)  \Longrightarrow\mu\left(  C\right)  =1/2$ and%
\[%
\begin{array}
[c]{l}%
g\left(  x\right)  =1/2\Longleftrightarrow x\in C,\\
g(x)=-1/2\Longleftrightarrow x\in C^{c}.
\end{array}
\]
Moreover, looking carefully at the proof of Proposition
\ref{prop g invariant S2 and phi}, we have%
\[
\left[  g\right]  \left(  \left[  x\right]  \right)  =\left\{
\begin{array}
[c]{l}%
\left[  g\right]  \left(  \left[  x+4\alpha\right]  \right)  \text{ \ if }%
x\in(0,\pi-4\alpha)\\
-\left[  g\right]  \left(  \left[  x+6\alpha\right]  \right)  \text{ \ if
}x\in\lbrack\pi-4\alpha,\pi),
\end{array}
\right.
\]
where instead of $g=g\circ\phi$ we have now used $g=-g\circ\phi$. Therefore,
(see (\ref{eq 39}))%
\begin{equation}
\left[  g\right]  \left(  \left[  x\right]  \right)  =-\left[  g\right]
\left(  \left[  x+\left(  4k+6\right)  \alpha\right]  \right)  \text{ for any
}x\in(0,\pi), \label{eq 40}%
\end{equation}
where $k\geq1$ is such that $\left(  4k+6\right)  \alpha>\pi>\left(  4\left(
k-1\right)  +6\right)  \alpha$. The minus sign in (\ref{eq 40}) tells us that
the rotation $R_{\beta}$ of angle $\beta:=\left(  4k+6\right)  \alpha-\pi$
sends $C$ to $C^{c}$ and vice versa.

Let $\mathbf{1}_{C}=\sum_{n=-\infty}^{\infty}a_{n}\operatorname*{e}%
\nolimits^{2inx}$ and $\mathbf{1}_{C^{c}}=\sum_{n=-\infty}^{\infty}%
\widetilde{a}_{n}\operatorname*{e}\nolimits^{2inx}$ the Fourier expansions of
$\mathbf{1}_{C}$ and $\mathbf{1}_{C^{c}}$ respectively. Since $\mathbf{1}%
_{C}+\mathbf{1}_{C^{c}}=\mathbf{1}_{[0,\pi]}$, we have that%
\begin{gather*}
\widetilde{a}_{n}=-a_{n}\text{ if }n\neq0\\
a_{0}+\widetilde{a}_{0}=1.
\end{gather*}
On the other hand, we already argued that $\mathbf{1}_{C}\circ R_{\beta
}=\mathbf{1}_{C^{c}}$. Imposing that the Fourier coefficients of
$\mathbf{1}_{C}$ and $\mathbf{1}_{C^{c}}$ are unique, we deduce that%
\[
a_{n}\operatorname*{e}\nolimits^{2in\beta}=\widetilde{a}_{n}=-a_{n}\text{ for
any }n\neq0.
\]
As $\beta/2\pi$ is irrational, $\operatorname*{e}\nolimits^{2in\beta}\neq-1$
for any $n\neq0$, which implies that $a_{n}=0$ for any $n\neq0$ so
$\mathbf{1}_{C}$ is constant a.s.. But this is clearly a contradiction because
(\ref{eq 25}) implied $\mu\left(  C\right)  =1/2$.
\end{proof}

\section{Exactness of $S$\label{section exactness of S}}

In this section, we are going to prove that $S$ is exact. This will imply
Knudsen's strong law for the random billiard introduced in Section
\ref{section model}.

Let $T:\Omega\rightarrow\Omega$ be a measurable transformation of a
probability space $\left(  \Omega,\mathcal{F},\mu\right)  $. From the
measurability of $T$, we have the chain of $\sigma$-algebras%
\begin{equation}
...\subseteq T^{-n}\left(  \mathcal{F}\right)  \subseteq...\subseteq
T^{-2}\left(  \mathcal{F}\right)  \subseteq T^{-1}\left(  \mathcal{F}\right)
\subseteq\mathcal{F}. \label{eq 32}%
\end{equation}
The map $T$ is called {\bfseries\itshape exact} if the $\sigma$-algebra
$\mathcal{G}:=\bigcap\nolimits_{n\geq0}T^{-n}\left(  \mathcal{F}\right)  $
only contains sets of measure either $0$ or $1$. It is not difficult to prove
that sets in $\mathcal{G}$ are characterised by the property
\[
A\in\mathcal{G}\Longleftrightarrow A=T^{-n}\left(  T^{n}\left(  A\right)
\right)  \text{ for any }n\in\mathbb{N}.
\]
This characterisation, in turn, implies that a map $T$ is exact if and only if%
\[
\lim_{n\rightarrow\infty}\mu\left(  T^{n}\left(  A\right)  \right)  =1
\]
for any $A\in\mathcal{\mathcal{F}}$ such that $\mu\left(  A\right)  >0$
(\cite[Section 2.2]{rohlin}). Equation (\ref{eq 32}) reads at the level of
$L^{2}$ spaces as%
\[
...\subseteq U_{T}^{n}L^{2}\subseteq...\subseteq U_{T}^{2}L^{2}\subseteq
U_{T}L^{2}\subseteq L^{2}%
\]
where $L^{2}=L_{\mathbb{C}}^{2}\left(  \Omega,\mathcal{F},\mu\right)  $ and
$U_{T}\left(  f\right)  =f\circ T$ is the {\bfseries\itshape Koopman operator}
defined on $L^{2}$. If a map is exact, then%
\[
\bigcap\nolimits_{n=0}^{\infty}U_{T}^{n}L^{2}=\mathbb{C},
\]
which means that $U_{T}^{n}\left(  f\right)  \rightarrow\operatorname*{E}%
_{\mu}\left[  f\right]  $ in $L^{2}$ as $n\rightarrow\infty$ (see
\cite[Section 2.5]{rohlin}). If $\nu$ is a probability absolutely continuous
with respect to $\mu$ with Radon-Nikodym derivative $f\in L^{1}$, then%
\[
T_{\ast}^{n}\nu\left(  A\right)  =\operatorname*{E}\nolimits_{\mu}\left[
fU_{T}^{n}\left(  \mathbf{1}_{A}\right)  \right]  \underset{n\rightarrow
\infty}{\longrightarrow}\mu\left(  A\right)  \operatorname*{E}\nolimits_{\mu
}\left[  f\right]  =\mu(A),
\]
that is, the sequence of measures $T_{\ast}^{n}\nu$ converges (weakly) to
$\mu$ for any absolutely continuous measure $\nu\ll\mu$ (\cite[Proposition
4.4.1 (b)]{Lasota}, \cite[Section 2.6]{rohlin}).

We want to show that the skew-type representation $S:[0,1)\times\lbrack
0,\pi]\rightarrow\lbrack0,1)\times\lbrack0,\pi]$ of the model introduced in
Section \ref{section model} is exact with respect to the Liouville measure to
conclude that, for any absolutely continuous measure $\nu\ll\mu$ on
$\mathcal{B}\left(  [0,\pi]\right)  $ and any $A\in\mathcal{B(}[0,\pi])$,
\begin{equation}
\nu^{(n)}\left(  A\right)  =\int_{[0,\pi]}\mathbf{1}_{A}\left(  \pi_{2}\circ
S^{n}\right)  d\nu=\int_{[0,\pi]}U_{S}^{n}\left(  \mathbf{1}_{\pi_{2}^{-1}%
(A)}\right)  d\lambda\otimes\nu\underset{n\rightarrow\infty}{\longrightarrow
}\mu\left(  A\right)  , \label{eq 33}%
\end{equation}
i.e., that the strong Knudsen's law holds. Observe from (\ref{eq 33}) that we
only need to consider the action of the Koopman operator $U_{S}$ on square
integrable functions that are the pull-back by $\pi_{2}$ of functions in
$L_{\mathbb{C}}^{2}\left(  [0,\pi],\mathcal{B}\left(  [0,\pi]\right)
,\mu\right)  $. We are going to denote $L_{\mathbb{C}}^{2}\left(
[0,\pi],\mathcal{B}\left(  [0,\pi]\right)  ,\mu\right)  $ simply by
$L^{2}\left(  [0,\pi]\right)  $ for the sake of a clearer notation.
Consequently, we do not need to check that $S$ is exact for the Borel $\sigma
$-algebra of $[0,1)\times\lbrack0,\pi]$ but it is enough to consider a smaller
one, namely, the smallest $\sigma$-algebra that makes both the functions in
$\pi_{2}^{\ast}\left(  L^{2}\left(  [0,\pi]\right)  \right)  $ and $S$ measurable.

\begin{definition}
For any $n\in\mathbb{N}$, let
\[
\mathcal{F}_{n}:=\sigma\left(  \left\{  \pi_{2}^{-1}\left(  U\right)  \bigcap
J_{i_{1}...i_{n}}~%
\vert
~U\in\mathcal{B}\left(  [0,\pi]\right)  \right\}  \right)  ,
\]
where $J_{i_{1}...i_{n}}$ are as in Definition \ref{definition slices}. For
any $U\in\mathcal{B}\left(  [0,\pi]\right)  $, the sets $\pi_{2}^{-1}\left(
U\right)  \bigcap J_{i_{1}...i_{n}}$ and $\pi_{2}^{-1}\left(  U\right)  $ will
be called \textbf{generators} (of $\mathcal{F}_{n}$).
\end{definition}

The sequence of $\sigma$-algebras $\left\{  \mathcal{F}_{n}\right\}
_{n\in\mathbb{N}}$ define a filtration,%
\[
\mathcal{F}_{1}\subseteq\mathcal{F}_{2}\subseteq...\subseteq\mathcal{F}%
_{n}\subseteq...
\]
Indeed, for any $U\in\mathcal{B}\left(  [0,\pi]\right)  $ and any $n\geq1$, a
generator $\pi_{2}^{-1}\left(  U\right)  \bigcap J_{i_{1}...i_{n}}$ can be
written as%
\[
\pi_{2}^{-1}\left(  U\right)  \bigcap J_{i_{1}...i_{n}}=\bigcup_{k=1}%
^{N}\left(  \pi_{2}^{-1}\left(  U\right)  \bigcap J_{ki_{1}...i_{n}}\right)
,
\]
which implies $\mathcal{F}_{n}\subseteq\mathcal{F}_{n+1}$. We define
$\mathcal{F}$ as the limit of this filtration.

\begin{definition}%
\[
\mathcal{F}:=\sigma\left(  \bigcup\nolimits_{n\geq1}\mathcal{F}_{n}\right)  .
\]

\end{definition}

\begin{proposition}
$S:[0,1)\times\lbrack0,\pi]\rightarrow\lbrack0,1)\times\lbrack0,\pi]$ is
$\mathcal{F}$-measurable. For any $f\in L^{2}\left(  [0,\pi]\right)  $,
$\pi_{2}^{\ast}\left(  f\right)  $ is also $\mathcal{F}$-measurable.
\end{proposition}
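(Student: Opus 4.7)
The plan is to prove both claims by reducing to the generators of $\mathcal{F}$ and using the piecewise-defined structure of $S$ on the sets $J_k$. The $\pi_2^*$-measurability is essentially immediate, and the $S$-measurability follows from two explicit preimage computations.

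First I would observe that every fibre-type set $\pi_2^{-1}(U)$ with $U\in\mathcal{B}([0,\pi])$ already belongs to $\mathcal{F}_1$, since
$$\pi_2^{-1}(U)=\bigcup_{k=1}^{N}\bigl(\pi_2^{-1}(U)\cap J_k\bigr),$$
a finite disjoint union of generators of $\mathcal{F}_1$. Consequently, for any $f\in L^2([0,\pi])$ and any Borel set $V\subseteq\mathbb{C}$, $(\pi_2^*f)^{-1}(V)=\pi_2^{-1}(f^{-1}(V))\in\mathcal{F}_1\subseteq\mathcal{F}$, which proves the second assertion.

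For the measurability of $S$, it suffices to check that $S^{-1}$ sends each generator of $\mathcal{F}$ into $\mathcal{F}$. The two key preimage identities are: (a) $S^{-1}(J_{j_1\cdots j_n})=\bigcup_{k=1}^{N}J_{j_1\cdots j_n k}$, which is transparent from Definition~\ref{definition slices} after shifting all conditions $S^i(\omega)\in J_{j_{n-i}}$ by one application of $S$ and noting that the new outermost condition $\omega\in J_k$ is free; and (b) $S^{-1}(\pi_2^{-1}(U))=\bigcup_{k=1}^{N}\bigl(J_k\cap\pi_2^{-1}(\tau_k^{-1}(U))\bigr)$, which follows from formula (\ref{eq 12}) because on $J_k$ one has $\pi_2\circ S=\tau_k\circ\pi_2$, and each $\tau_k^{-1}(U)\in\mathcal{B}([0,\pi])$ by measurability of $\tau_k$.

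Combining (a) and (b), for the general generator $\pi_2^{-1}(U)\cap J_{j_1\cdots j_n}$ of $\mathcal{F}_n$ one obtains
$$S^{-1}\bigl(\pi_2^{-1}(U)\cap J_{j_1\cdots j_n}\bigr)=\bigcup_{k=1}^{N}\Bigl(J_{j_1\cdots j_n k}\cap \pi_2^{-1}(\tau_k^{-1}(U))\Bigr),$$
which is a finite union of generators of $\mathcal{F}_{n+1}$ and hence lies in $\mathcal{F}$. Since the class of sets whose preimage lies in $\mathcal{F}$ is a $\sigma$-algebra and it contains a generating family of $\mathcal{F}$, we conclude $S^{-1}(\mathcal{F})\subseteq\mathcal{F}$.

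There is no real obstacle here — the whole point is that the filtration $\{\mathcal{F}_n\}$ was designed precisely so that one step of $S$ costs at most one level in the filtration; the only thing to watch is the bookkeeping in identity (a), making sure the indexing convention in Definition~\ref{definition slices} is respected so that $S^{-1}(J_{j_1\cdots j_n})$ lands in $\mathcal{F}_{n+1}$ rather than $\mathcal{F}_{n-1}$.
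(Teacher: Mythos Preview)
Your proof is correct and follows essentially the same approach as the paper: both reduce to checking that $S^{-1}$ of a generator $\pi_2^{-1}(U)\cap J_{i_1\cdots i_n}$ is the finite union $\bigcup_{k}\bigl(\pi_2^{-1}(\tau_k^{-1}(U))\cap J_{i_1\cdots i_n k}\bigr)\in\mathcal{F}_{n+1}$. You are simply more explicit than the paper, splitting this computation into the two intermediate identities (a) and (b) and spelling out the $\sigma$-algebra argument for why generators suffice, whereas the paper states the combined formula in one line and calls the second assertion ``obvious.''
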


\begin{proof}
The second statement of the proposition is obvious. To prove that $S$ is
$\mathcal{F}$-measurable it is enough to show that $S^{-1}\left(  A\right)
\in\mathcal{F}$ for the generators of $\mathcal{F}$. So let $A\in\mathcal{F}$
such that $A=\pi_{2}^{-1}\left(  U\right)  \bigcap J_{i_{1}...i_{n}}$ where
$U\in\mathcal{B}\left(  [0,\pi]\right)  $ and $n\geq1$. Then, by definition of
the sets $J_{i_{1}...i_{n}}$%
\[
S^{-1}\left(  A\right)  =\bigcup_{k=1}^{N}\left(  \pi_{2}^{-1}\left(  \tau
_{k}^{-1}\left(  U\right)  \right)  \bigcap J_{i_{1}...i_{n}k}\right)
\]
which clearly belongs to $\mathcal{F}$.\smallskip
\end{proof}

The following two lemmas aim at getting a better insight of the structure of
the $\sigma$-algebra $\mathcal{F}$. As an immediate consequence of them, it
will be enough to show that $S$ is exact by looking at its action on the
generators of $\mathcal{F}$.

\begin{lemma}
\label{prop finite union generators}Any set $A\in\mathcal{F}_{n}$ can be
expressed as a finite union of disjoint generators.
\end{lemma}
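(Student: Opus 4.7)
The plan is to introduce the family
\[
\mathcal{C}_n := \left\{ \bigsqcup_{\mathbf{i} \in \{1,\ldots,N\}^n} \left(\pi_{2}^{-1}(U_{\mathbf{i}}) \cap J_{\mathbf{i}}\right) \,:\, U_{\mathbf{i}} \in \mathcal{B}([0,\pi]) \text{ for each } \mathbf{i} \right\}
\]
and to show that $\mathcal{C}_n = \mathcal{F}_n$. Every element of $\mathcal{C}_n$ is, by construction, a finite disjoint union of generators, so once the equality is established the lemma follows. Since $\mathcal{F}_n$ is a $\sigma$-algebra that contains all such finite disjoint unions, only the inclusion $\mathcal{F}_n \subseteq \mathcal{C}_n$ is at stake, and for this it suffices to prove that $\mathcal{C}_n$ is itself a $\sigma$-algebra containing every generator.

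The key structural fact I would exploit is that, because $\{J_i\}_{i=1}^{N}$ is a partition of $[0,1) \times [0,\pi]$ and $J_{\mathbf{i}}$ is defined in Definition \ref{definition slices} by prescribing the entire itinerary of $\omega$ under $S^0, S, \ldots, S^{n-1}$, the collection $\{J_{\mathbf{i}}\}_{\mathbf{i} \in \{1,\ldots,N\}^n}$ is itself a \emph{finite} measurable partition of $[0,1) \times [0,\pi]$. In particular, a generator of the form $\pi_{2}^{-1}(U)$ decomposes as $\bigsqcup_{\mathbf{i}} (\pi_{2}^{-1}(U) \cap J_{\mathbf{i}})$ and hence lies in $\mathcal{C}_n$, while a generator of the form $\pi_{2}^{-1}(U) \cap J_{\mathbf{j}}$ corresponds to the choice $U_{\mathbf{j}} = U$ and $U_{\mathbf{i}} = \emptyset$ for $\mathbf{i} \neq \mathbf{j}$.

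Next I would verify the $\sigma$-algebra axioms for $\mathcal{C}_n$; once every set is written in the canonical form $A = \bigsqcup_{\mathbf{i}} \pi_{2}^{-1}(U_{\mathbf{i}}) \cap J_{\mathbf{i}}$ the verifications become almost mechanical. The empty set and the full space arise from $U_{\mathbf{i}} \equiv \emptyset$ and $U_{\mathbf{i}} \equiv [0,\pi]$ respectively. Closure under complements follows from the disjointness and exhaustiveness of the $J_{\mathbf{i}}$, which give $A^c = \bigsqcup_{\mathbf{i}} \pi_{2}^{-1}(U_{\mathbf{i}}^{c}) \cap J_{\mathbf{i}}$. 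For a countable union of sets $A_m = \bigsqcup_{\mathbf{i}} \pi_{2}^{-1}(U_{\mathbf{i}}^{(m)}) \cap J_{\mathbf{i}}$, one unites the Borel selectors index by index, obtaining $\bigcup_m A_m = \bigsqcup_{\mathbf{i}} \pi_{2}^{-1}\bigl(\bigcup_m U_{\mathbf{i}}^{(m)}\bigr) \cap J_{\mathbf{i}}$, which again belongs to $\mathcal{C}_n$ since countable unions of Borel sets are Borel and the outer index set is finite.

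The only step that is not pure bookkeeping is the complement, where both the disjointness and the exhaustiveness of the partition $\{J_{\mathbf{i}}\}$ are essential; the finiteness of the index set $\{1,\ldots,N\}^n$ is what prevents the countable-union operation from leaking outside $\mathcal{C}_n$. Modulo these two observations the lemma reduces to standard properties of Borel sets, and I do not anticipate any deeper obstacle.
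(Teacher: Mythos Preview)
Your proof is correct and follows essentially the same approach as the paper: both exploit that $\{J_{\mathbf{i}}\}_{\mathbf{i}\in\{1,\dots,N\}^n}$ is a finite measurable partition of $[0,1)\times[0,\pi]$, and both verify closure under complements and countable unions by reducing each operation to the corresponding Borel operation on the $U_{\mathbf{i}}$'s. The only difference is organizational: you introduce the canonical form $\mathcal{C}_n$ up front and check the $\sigma$-algebra axioms in that form, whereas the paper checks the same closure properties directly on finite unions of generators; your presentation is arguably cleaner, but the substance is identical.
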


\begin{proof}
First to all, observe that, for a fixed $n\in\mathbb{N}$, the sets
$J_{i_{1}...i_{n}}$ are disjoint and form a partition of $[0,1)\times
\lbrack0,\pi]$.

The complement of a generator $\pi_{2}^{-1}\left(  U\right)  \bigcap
J_{i_{1}...i_{n}}$ is%
\[
\left(  \pi_{2}^{-1}\left(  U\right)  \bigcap J_{i_{1}...i_{n}}\right)
^{c}=\left(  \biguplus_{j_{1}\neq i_{1},...,j_{n}\neq i_{n}}\pi_{2}%
^{-1}\left(  U\right)  \bigcap J_{j_{1}...j_{n}}\right)  \biguplus\pi_{2}%
^{-1}\left(  \left[  0,\pi\right]  \backslash U\right)  ,
\]
where $\biguplus$ denotes the disjoint union, so it can be expressed as a
finite union of generators. On the other hand, if we take a countable union of
generators $\left\{  C_{k}\right\}  _{k\geq1}$, $C_{k}=\pi_{2}^{-1}\left(
U_{k}\right)  \bigcap J_{i_{1}^{k}...i_{n}^{k}}$,%
\[
\bigcup_{k\geq1}\left(  \pi_{2}^{-1}\left(  U_{k}\right)  \bigcap J_{i_{1}%
^{k}...i_{n}^{k}}\right)  =\biguplus_{i_{1}^{k},...,i_{n}^{k}}\left(
\bigcup\nolimits_{m\in I_{i_{1}^{k}...i_{n}^{k}}}\pi_{2}^{-1}\left(
U_{m}\right)  \right)  \bigcap J_{i_{1}^{k}...i_{n}^{k}}%
\]
where $I_{i_{1}^{k}...i_{n}^{k}}:=\{k\geq1~|~\pi_{2}^{-1}\left(  U_{k}\right)
\bigcap J_{i_{1}^{k}...i_{n}^{k}}$ is in $\left\{  C_{m}\right\}  _{m\geq1}%
\}$. But%
\[
\biguplus_{i_{1}^{k},...,i_{n}^{k}}\left(  \bigcup\nolimits_{m\in I_{i_{1}%
^{k}...i_{n}^{k}}}\pi_{2}^{-1}\left(  U_{m}\right)  \right)  \bigcap
J_{i_{1}^{k}...i_{n}^{k}}=\biguplus_{i_{1}^{k},...,i_{n}^{k}}\left(  \pi
_{2}^{-1}\left(  \bigcup\nolimits_{m\in I_{i_{1}^{k}...i_{n}^{k}}}%
U_{k}\right)  \right)  \bigcap J_{i_{1}^{k}...i_{n}^{k}}.
\]
where the indices $i_{1}^{k},...,i_{n}^{k}$ can only take finite number of
possibilities. Therefore, a countable union of generators reduces to a finite
union.\smallskip
\end{proof}

The union $\bigcup\nolimits_{n\geq1}\mathcal{F}_{n}$ of a filtration
$\mathcal{F}_{1}\subseteq\mathcal{F}_{2}\subseteq...$ is an algebra of sets.
The $\sigma$-algebra generated by an algebra of sets $\mathcal{A}$ is
characterised by containing all the sets in $\mathcal{A}$ and the limit of all
monotone sequence of sets. That is, $\sigma\left(  \mathcal{A}\right)  $
coincides with the monotone class generated by $\mathcal{A}$ (\cite[Section
1.3, Theorem 1]{chow probability theory}). This observation is the key to
proving the following Lemma:

\begin{lemma}
\label{Lemma A contains generator}If $A\in\mathcal{F=}\sigma\left(
\bigcup\nolimits_{n\geq1}\mathcal{F}_{n}\right)  $ has positive probability,
then $A$ contains a generator of positive probability.
\end{lemma}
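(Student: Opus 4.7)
The plan is to apply the monotone class theorem, using the characterization recalled just before the lemma: since $\bigcup_{n\geq 1}\mathcal{F}_n$ is an algebra, the $\sigma$-algebra $\mathcal{F}=\sigma(\bigcup_n\mathcal{F}_n)$ coincides with the smallest monotone class containing it. I therefore set
\[
\mathcal{M}:=\{A\in\mathcal{F}\ :\ A\ \text{has probability zero, or contains a generator of positive probability}\},
\]
and aim to show that $\mathcal{M}$ both contains the algebra $\bigcup_n\mathcal{F}_n$ and is itself a monotone class, which forces $\mathcal{M}=\mathcal{F}$ and proves the lemma.

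The inclusion $\bigcup_n\mathcal{F}_n\subseteq\mathcal{M}$ is exactly what Lemma \ref{prop finite union generators} is tailored for: every element of $\mathcal{F}_n$ is a finite disjoint union of generators, so if such a set has positive probability then at least one summand does, and that summand is contained in the set. Closure of $\mathcal{M}$ under increasing limits is equally direct: if $A_n\uparrow A$ with $A_n\in\mathcal{M}$ and $A$ of positive probability, continuity of measure gives some $A_{n_0}$ of positive probability, and the positive-probability generator contained in $A_{n_0}$ automatically lies in $A$.

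The main obstacle, and what I expect to be the heart of the argument, is closure under decreasing limits. Given $A_n\downarrow A$ with each $A_n\in\mathcal{M}$ and $A$ of positive probability, every $A_n$ supplies some positive-probability generator $G_n\subseteq A_n$, but these generators depend on $n$ and need not nest nor converge to a generator sitting inside $A$. To handle this I would exploit the refining structure of the partitions: each generator $\pi_2^{-1}(U)\cap J_{i_1\cdots i_k}$ at level $k$ decomposes as the disjoint union $\bigsqcup_j\pi_2^{-1}(U)\cap J_{j\,i_1\cdots i_k}$ at level $k+1$. Starting from some $G_1\subseteq A_1$, at each step I would approximate $A_{n+1}$ by an element of the algebra inside the current $G_n$ and use Lemma \ref{prop finite union generators} to pick a sub-generator $G_{n+1}\subseteq G_n$ which meets $A_{n+1}$ on most of its mass, yielding a nested sequence $G_1\supseteq G_2\supseteq\cdots$. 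The delicate point is upgrading ``$G_{n+1}$ meets $A_{n+1}$ on most of its mass'' to actual set containment while keeping the probability of $G_n$ bounded away from zero, so that the limiting intersection remains essentially a positive-probability generator contained in $A$; I expect this step to rely on a density argument based on the martingale convergence $E[\mathbf{1}_A\mid\mathcal{F}_n]\to\mathbf{1}_A$ a.e., with each refinement chosen at a density point of $A$.
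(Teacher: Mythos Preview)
Your monotone-class framing and your handling of the algebra case and of increasing limits are correct and parallel the paper. The gap is entirely in the decreasing case. To close $\mathcal{M}$ under decreasing limits you would have to work with $A_n\downarrow A$ where each $A_n$ merely \emph{contains} some positive-probability generator $G_n$; this hypothesis gives no structural control over $A_n$ and no mechanism forcing the $G_n$ to nest or even to be comparable. Even if you could manufacture a chain $G_1\supseteq G_2\supseteq\cdots$ with strictly growing word-length, the intersection $\bigcap_n G_n$ would carry an infinite word and would not be a generator in the sense of the definition. The martingale heuristic $E[\mathbf{1}_A\mid\mathcal{F}_n]\to\mathbf{1}_A$ produces, for a.e.\ point of $A$, fibre-cells on which the density of $A$ is \emph{close to} $1$; it does not produce a single finite word $w$ and a set $U$ of positive $\mu$-measure with $I_x\cap J_w\subseteq A_x$ for a.e.\ $x\in U$, which is exactly what ``$A$ contains a generator'' demands. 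So the density idea, as stated, does not close the gap.

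The paper's route through the decreasing case is quite different and supplies the idea your sketch is missing. Taking the $B_n$ from the algebra, it passes to the complement $A^c=\bigcup_n B_n^c$, which is an \emph{increasing} union of algebra elements and hence, via Lemma~\ref{prop finite union generators}, a countable union of generators. Regrouping those generators by their $J$-word, one argues that either some word carries a $U$-part of $\mu$-measure strictly less than $1$, whose fibrewise complement then furnishes a generator inside $A$, or else $A^c$ is a.s.\ a union of full $J$-blocks and some block of positive measure is absent from that union and therefore lies in $A$. Note that this complement trick relies on $B_n\in\bigcup_m\mathcal{F}_m$, not merely $B_n\in\mathcal{M}$, so it does not transplant directly into your monotone-class framework without further argument.
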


\begin{proof}
Let $A\in\sigma(%
{\textstyle\bigcup\nolimits_{n\geq1}}
\mathcal{F}_{n})$. If $A\in\bigcup\nolimits_{n\geq1}\mathcal{F}_{n}$, then
$A\in\mathcal{F}_{n}$ for some $n\in\mathbb{N}$. By Proposition
\ref{prop finite union generators}, $A$ can be expressed as a finite union of
generators and at least one of them must have positive probability. If
$A\notin\bigcup\nolimits_{n\geq1}\mathcal{F}_{n}$, then there exists a
monotone sequence of sets $\left\{  B_{n}\right\}  _{n\in\mathbb{N}}%
\subset\bigcup\nolimits_{n\geq1}\mathcal{F}_{n}$ such that $A=\lim_{n}B_{n}$.
We are going to deal with the cases $\left\{  B_{n}\right\}  _{n\in\mathbb{N}%
}$ increasing or decreasing separately.

\begin{itemize}
\item If $\left\{  B_{n}\right\}  _{n\in\mathbb{N}}$ is increasing, then
$A=\bigcup\nolimits_{n\geq1}B_{n}$. Since $\lambda\otimes\mu\left(  A\right)
>0$, at least one $B_{n}$ must have positive measure, $\lambda\otimes
\mu\left(  B_{n}\right)  >0$. But $B_{n}\in\mathcal{F}_{m}$ for some $m$, so
it contains a generator of positive probability.

\item Let $\left\{  B_{n}\right\}  _{n\in\mathbb{N}}\subset\bigcup
\nolimits_{n\geq1}\mathcal{F}_{n}$ be a decreasing sequence such that
$A=\bigcap\nolimits_{n\geq1}B_{n}$. Suppose that $0<\mu\left(  A\right)  <1$
(otherwise there is nothing to prove). We have%
\[
A=\left(  \bigcup\nolimits_{n\geq1}B_{n}^{c}\right)  ^{c}=\left(  \left(
\biguplus\nolimits_{n\geq1}\left.  B_{n+1}^{c}\right\backslash B_{n}%
^{c}\right)  \biguplus B_{1}^{c}\right)  ^{c},
\]
so that $A$ can be expressed as the complement of a countable union of
disjoint sets. Rename $C_{1}:=B_{1}^{c}$ and $C_{n}:=\left.  B_{n+1}%
^{c}\right\backslash B_{n}^{c}$, and express any $C_{n}$ as a disjoint union
of generators (Lemma \ref{prop finite union generators}),%
\[
C_{n}=\biguplus_{r=1}^{k_{n}}\pi_{2}^{-1}\left(  U_{r_{n}}\right)  \bigcap
J_{i_{1}^{n}...i_{r}^{n}}.
\]
In this decomposition, we only consider sets of strictly positive probability.
Then,%
\begin{equation}
A^{c}=\biguplus_{n\geq1}\biguplus_{r_{n}=1}^{k_{n}}\pi_{2}^{-1}\left(
U_{r_{n}}\right)  \bigcap J_{i_{1}^{n}...i_{r}^{n}}=\biguplus_{i_{1}%
^{n}...i_{r}^{n}}\left(  \bigcup\nolimits_{m\in I_{i_{1}^{n}...i_{r}^{n}}}%
\pi_{2}^{-1}\left(  U_{m}\right)  \right)  \bigcap J_{i_{1}^{n}...i_{r}^{n}}
\label{eq 38}%
\end{equation}
where $I_{i_{1}^{n}...i_{r}^{n}}:=\{m\geq1~|~\pi_{2}^{-1}\left(  U_{m}\right)
\bigcap J_{i_{1}^{n}...i_{r}^{n}}\subseteq A^{c}=\biguplus\nolimits_{n}%
C_{n}\}$. We claim that there exists a generator $G$ of strictly positive
probability that does not intersect $A^{c}$, i.e., $G\subseteq A$.

Let $i_{1}^{n}...i_{r}^{n}$ be a sequence appearing in the decomposition
(\ref{eq 38}). If $\mu(\bigcup\nolimits_{m\in I_{i_{1}^{n}...i_{r}^{n}}}%
U_{m})<1$, then%
\[
\left(  \left.  \left[  0,\pi\right]  \right\backslash \pi_{2}^{-1}\left(
\bigcup\nolimits_{m\in I_{i_{1}^{n}...i_{r}^{n}}}U_{m}\right)  \right)
\bigcap J_{i_{1}^{n}...i_{r}^{n}}%
\]
is a generator of strictly positive measure that does not intersect $A^{c}$.
If $\mu(\bigcup\nolimits_{m\in I_{i_{1}^{n}...i_{r}^{n}}}U_{m})=1$ for any
finite sequence $i_{1}^{n}...i_{r}^{n}$ in (\ref{eq 38}), then $A^{c}%
=\biguplus_{i_{1}^{n}...i_{r}^{n}}J_{i_{1}^{n}...i_{r}^{n}}$ a.s. and since we
assumed that $0<\mu\left(  A^{c}\right)  <1$, there must exist some set
$J_{i_{1}...i_{k}}$ with positive probability that does not appear in the
decomposition of $A^{c}$. That is, $J_{i_{1}...i_{k}}=\pi_{2}^{-1}\left(
[0,\pi]\right)  \bigcap J_{i_{1}...i_{k}}\subseteq A$.
\end{itemize}
\end{proof}

\begin{theorem}
For any $A\in\mathcal{F}$,
\[
\lim_{n\rightarrow\infty}\lambda\otimes\mu\left(  S^{n}\left(  A\right)
\right)  =1.
\]
In other words, $S:[0,1)\times\lbrack0,\pi]\rightarrow\lbrack0,1)\times
\lbrack0,\pi]$ is an exact endomorphism of $\left(  [0,1)\times\lbrack
0,\pi],\mathcal{F},\lambda\otimes\mu\right)  $.
\end{theorem}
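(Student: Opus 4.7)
The plan is to reduce, via Lemma \ref{Lemma A contains generator}, to the case where $A$ is a single generator $\pi_2^{-1}(U) \cap J_{i_1\cdots i_m}$ of positive measure; then to use the piecewise-affine self-similar structure of $S$ to reduce further to a full cylinder; and finally to invoke Proposition \ref{propositon characteristic function invariant} on the cylinder case. For the first reduction I would observe that on each fibre $I_x$ with $x \in U$ the set $A \cap I_x$ is an interval whose length is the product in Lemma \ref{lemma slices go to zero}, and that $S^m$ expands this interval piecewise-affinely and bijectively onto the full fibre over $\tau_{i_1}\circ\cdots\circ\tau_{i_m}(x)$. Hence $S^m(A) = \pi_2^{-1}(V)$, where $V := \tau_{i_1}\circ\cdots\circ\tau_{i_m}(U \cap D_{i_1\cdots i_m})$ and $D_{i_1\cdots i_m}$ is the set on which the product of probabilities in Lemma \ref{lemma slices go to zero} is strictly positive; this $V$ has $\mu(V) > 0$ because each $\tau_k$ is a piecewise isometry and the sine density is strictly positive on $(0,\pi)$. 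Defining the set-valued pushforward $\tau_* W := \bigcup_{k=1}^{4} \tau_k(W \cap \{p_k > 0\})$, the same fibre calculation yields $S^n(\pi_2^{-1}(V)) = \pi_2^{-1}(W_n)$ with $W_n := \tau_*^n V$, so the theorem reduces to showing $\mu(W_n) \to 1$.

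The key observation I would then establish is that the random map admits two-step self-loops at almost every point. From (\ref{eq tau maps}) one has the algebraic identities $\tau_3 \circ \tau_1 = \mathrm{Id} = \tau_1 \circ \tau_3$; inspection of (\ref{eq probabilities model}) shows that $p_1(x) > 0$ and $p_3(x+2\alpha) > 0$ on $(0,\pi-2\alpha)$, while $p_3(x) > 0$ and $p_1(x-2\alpha) > 0$ on $(2\alpha,\pi)$. The hypothesis $\alpha < \pi/6$ forces $2\alpha < \pi - 2\alpha$, so these two open intervals cover $(0,\pi)$ up to a null set, and therefore $W \subseteq \tau_*^2 W$ almost everywhere for every measurable $W \subseteq [0,\pi]$. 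Applied with $W = W_{2k}$ and $W = W_{2k+1}$ this gives $W_{2k} \subseteq W_{2k+2}$ and $W_{2k+1} \subseteq W_{2k+3}$, so the even and odd subsequences of $(W_n)$ are each set-wise monotone increasing.

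Monotone convergence then gives $\mu(W_{2k}) \uparrow \mu(W_{\mathrm{even}})$ with $W_{\mathrm{even}} := \bigcup_k W_{2k}$, and $W_{\mathrm{even}}$ is $\tau_*^2$-forward invariant since $\tau_*^2 W_{\mathrm{even}} = \bigcup_k W_{2k+2} \subseteq W_{\mathrm{even}}$. Lifting to $\Omega$, $S^2(\pi_2^{-1}(W_{\mathrm{even}})) \subseteq \pi_2^{-1}(W_{\mathrm{even}})$, and because $S^2$ preserves $\lambda\otimes\mu$ this inclusion becomes an a.e.\ equality, so $\pi_2^{\ast}(\mathbf{1}_{W_{\mathrm{even}}})$ is genuinely $S^2$-invariant. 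Proposition \ref{propositon characteristic function invariant} then forces $\mu(W_{\mathrm{even}}) \in \{0,1\}$, and the inclusion $V = W_0 \subseteq W_{\mathrm{even}}$ together with $\mu(V)>0$ rules out the value $0$, so $\mu(W_{\mathrm{even}})=1$. Repeating the argument with $W_1$ in place of $V$ gives $\mu(W_{2k+1})\to 1$, and the two parities together yield $\mu(W_n)\to 1$. The main obstacle I foresee is precisely the two-step self-loop observation: without set-wise monotonicity of some subsequence of $(W_n)$, Proposition \ref{propositon characteristic function invariant} alone only certifies that the global reachable set $\bigcup_n W_n$ has full measure, which is too weak to conclude that $\mu(W_n)\to 1$ itself; the argument genuinely relies on the identities $\tau_3\tau_1 = \tau_1\tau_3 = \mathrm{Id}$ together with the quantitative bound $\alpha<\pi/6$.
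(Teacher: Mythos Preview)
Your proof is correct and follows essentially the same route as the paper's: reduce via Lemma~\ref{Lemma A contains generator} to a generator, push it forward by $S^m$ to a full cylinder $\pi_2^{-1}(V)$, use two-step self-loops to obtain the monotonicity $B\subseteq S^2(B)$ (equivalently $W_n\subseteq W_{n+2}$), form the increasing union, and apply Proposition~\ref{propositon characteristic function invariant} to the resulting $S^2$-invariant characteristic function. The only cosmetic differences are that you work on the base via $\tau_*$ while the paper stays in the product space, you invoke only the identities $\tau_3\tau_1=\tau_1\tau_3=\mathrm{Id}$ (which indeed suffice to cover $(0,\pi)$ when $\alpha<\pi/6$) whereas the paper records all four, and you treat the odd subsequence by re-running the argument from $W_1$ whereas the paper sandwiches $\lambda\otimes\mu(S^{2n}(B))\le\lambda\otimes\mu(S^{2n+1}(B))\le\lambda\otimes\mu(S^{2n+2}(B))$ using $S^k(B)\subseteq S^{-1}(S^{k+1}(B))$ and measure preservation. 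One phrasing to tighten: from $S^2(E)\subseteq E$ (with $E=\pi_2^{-1}(W_{\mathrm{even}})$) measure preservation does not directly make \emph{that} inclusion an equality; rather it gives $E\subseteq S^{-2}(S^2(E))\subseteq S^{-2}(E)$ with $\lambda\otimes\mu(S^{-2}(E))=\lambda\otimes\mu(E)$, whence $\mathbf{1}_E=\mathbf{1}_E\circ S^2$ a.e., which is what Proposition~\ref{propositon characteristic function invariant} needs.
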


\begin{proof}
Let $A\in\mathcal{F}$ be such that $\lambda\otimes\mu\left(  A\right)  >0$. By
Lemma \ref{Lemma A contains generator}, $A$ contains a generator $G=\pi
_{2}^{-1}\left(  U\right)  \bigcap J_{i_{1}...i_{n}}$ of positive measure for
some $U\in\mathcal{B}\left(  [0,\pi]\right)  $, $n\geq0$. It is an immediate
consequence of the definitions that, after $n$ iterations, the set
$S^{n}\left(  G\right)  $ has all its fibers of length $1$, that is,%
\[
\lambda\left(  \left(  \lbrack0,1)\times\{x\}\right)  \bigcap S^{n}\left(
G\right)  \right)  =1\text{ for any }x=\tau_{i_{1}}\circ...\circ\tau_{i_{n}%
}\left(  u\right)  ,~u\in U.
\]

Let $B:=S^{n}\left(  G\right)  $ and let $I_{x}:=[0,1)\times\{x\}$ be the
fiber at $x\in\lbrack0,\pi]$. Looking carefully at (\ref{eq tau maps}) and
(\ref{eq probabilities model}), one can see that%
\[
\tau_{2}\circ\tau_{2}=\tau_{4}\circ\tau_{4}=\tau_{1}\circ\tau_{3}=\tau
_{3}\circ\tau_{1}=\operatorname*{Id},
\]
where $\operatorname*{Id}$ denotes the identity on $[0,\pi]$, and%
\begin{align*}
p_{2}\left(  x\right)   &  >0\Rightarrow p_{2}\left(  x\right)  p_{2}\left(
\tau_{2}\left(  x\right)  \right)  >0\\
p_{4}\left(  x\right)   &  >0\Rightarrow p_{4}\left(  x\right)  p_{4}\left(
\tau_{4}\left(  x\right)  \right)  >0\\
p_{1}\left(  x\right)   &  >0\Rightarrow p_{1}\left(  x\right)  p_{3}\left(
\tau_{1}\left(  x\right)  \right)  >0\\
p_{3}\left(  x\right)   &  >0\Rightarrow p_{3}\left(  x\right)  p_{1}\left(
\tau_{3}\left(  x\right)  \right)  >0.
\end{align*}
These remarks imply that, if $I_{x}\cap J_{i}\neq\emptyset$,
\[
I_{x}\cap J_{i}\subseteq S^{2}\left(  I_{x}\cap J_{i}\right)  .
\]
Since all the fibers of $B$ have length $1$, $B\subseteq S^{2}\left(
B\right)  $ and, iteratively,%
\[
B\subseteq S^{2}\left(  B\right)  \subseteq...\subseteq S^{2n}\left(
B\right)  \subseteq...
\]
Define $C:=\bigcup\nolimits_{n\geq0}S^{2n}\left(  B\right)  $. Obviously
$S^{2}\left(  C\right)  =C$, the fibers of $C$ have all length $1$, and
$\lambda\otimes\mu\left(  C\right)  =\lim_{n\rightarrow\infty}\lambda
\otimes\mu\left(  S^{2n}\left(  B\right)  \right)  $. We want to show that
$\lambda\otimes\mu\left(  C\right)  =1$. Observe that $\lambda\otimes
\mu\left(  C\right)  >0$ because $\lambda\otimes\mu\left(  B\right)  >0$. From
$S^{2}\left(  C\right)  =C$ we have%
\[
C\subseteq S^{-2}\left(  S^{2}\left(  C\right)  \right)  =S^{-2}\left(
C\right)  ;
\]
but $C$ and $S^{-2}\left(  S^{2}\left(  C\right)  \right)  $ have the same
measure ($S^{2}$ is $\lambda\otimes\mu$-preserving), so $\mathbf{1}_{C}$ is a
$S^{2}$-invariant function a.s.. Since $\mathbf{1}_{C}=\pi_{2}^{\ast
}(\mathbf{1}_{\pi_{2}(C)})$ a.s. (the fibers of $C$ have all length $1$ a.s.),
$\pi_{2}\left(  C\right)  $ has full measure by Proposition
\ref{propositon characteristic function invariant}. Therefore, $\lambda
\otimes\mu\left(  C\right)  =1$ and
\[
\lim_{n\rightarrow\infty}S^{2n}\left(  B\right)  =1.
\]
This proves that $S^{2}$ is exact. In general, we have%
\begin{align*}
S^{2n}\left(  B\right)   &  \subseteq S^{-1}\left(  S\left(  S^{2n}\left(
B\right)  \right)  \right)  \subseteq S^{-1}\left(  S^{2n+1}\left(  B\right)
\right) \\
S^{2n+1}\left(  B\right)   &  \subseteq S^{-1}\left(  S\left(  S^{2n+1}\left(
B\right)  \right)  \right)  \subseteq S^{-1}\left(  S^{2(n+1)}\left(
B\right)  \right)
\end{align*}
so that, using again that $S$ is $\lambda\otimes\mu$-preserving,%
\[
\lambda\otimes\mu\left(  S^{2n}\left(  B\right)  \right)  \leq\lambda
\otimes\mu\left(  S^{2n+1}\left(  B\right)  \right)  \leq\lambda\otimes
\mu\left(  S^{2(n+1)}\left(  B\right)  \right)  ,
\]
which proves that $\left\{  \lambda\otimes\mu\left(  S^{n}\left(  B\right)
\right)  \right\}  _{n\in\mathbb{N}}$ is increasing and converges to $1$,
i.e., $S$ is also exact.\smallskip
\end{proof}

To sum up, the skew-type representation $S:[0,1)\times\lbrack0,\pi
]\rightarrow\lbrack0,1)\times\lbrack0,\pi]$ associated to the random map
(\ref{eq tau maps}) and (\ref{eq probabilities model}) is exact which implies,
by (\ref{eq 33}), that $\nu^{(n)}\rightarrow\mu$ as $n\rightarrow\infty$ for
any initial distribution $\nu$ on $[0,\pi]$ absolutely continuous with respect
to the law $\mu\left(  A\right)  =%
\frac12
\int_{A}\sin\left(  \theta\right)  d\theta$, $A\in\mathcal{B}\left(
[0,\pi]\right)  $. Since $\sin\left(  \theta\right)  >0$ on $\left(
0,\pi\right)  $, $\mu$ and the Lebesgue measure $\lambda$ are absolutely
continuous with respect to each other on $\left(  0,\pi\right)  $. In other
words, \textit{strong Knudsen's law }$\nu^{(n)}\rightarrow\mu$\textit{ holds
for any initial distribution absolutely continuous with respect to }$\lambda
$\textit{.}

\section{Knudsen's law. Simulations\label{section knudsen's law}}

In this section, we are going to show that numerical simulations are in
accordance with theoretical results. With this aim, we take an arbitrary
initial distribution on $\left[  0,\pi\right]  $ such as in the following
picture and we make it evolve according to our random system
(\ref{eq tau maps}) and (\ref{eq probabilities model}).%

\begin{center}
\includegraphics[
natheight=4.375100in,
natwidth=5.843500in,
height=2.6221in,
width=3.4904in
]%
{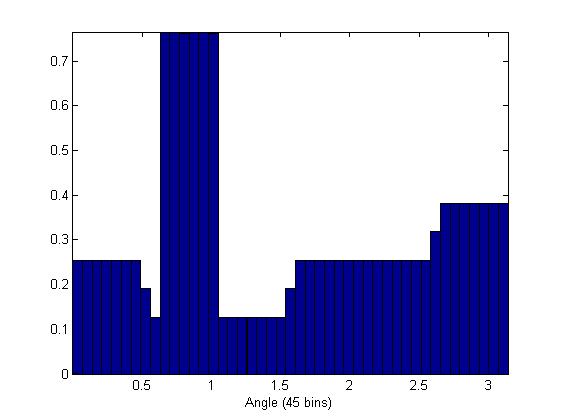}%
\\
Initial normalised distribution of particles.
\label{fig initial distribution}%
\end{center}
Since we can only simulate a finite number particles in a computer, in this
experiment we take a total amount of 30000 balls and divide $\left[
0,\pi\right]  $ in subintervals of the same length (45 in our experiment).
That is, we approximate the initial distribution by a step function. In each
subinterval, we put the proportion of balls according to the probability
density function above. The initial angle associated to any of those balls is
the middle point of the interval where they fall. The simulation then goes as
follows. At the $i$-\textit{th} step, we take the $j$-\textit{th} particle
with angle $\theta_{j}^{(i)}$ and a random number $y_{j}^{(i)}$ uniformly
distributed between $0$ and $1$, one for each particle. If $(y_{j}%
^{(i)},\theta_{j}^{(i)})\in J_{k}$, then $\theta_{j}^{(i+1)}=\tau_{k}%
(\theta_{j}^{(i)})$, where $\{\tau_{k}\}_{k=1,...,4}$ are as in
(\ref{eq tau maps}), and so on. After a long number of iterations, the
distribution stabilises as follows:
\begin{center}
\includegraphics[
trim=0.000000in -0.011480in 0.000000in 0.011480in,
natheight=4.375100in,
natwidth=5.843500in,
height=2.674in,
width=3.5596in
]%
{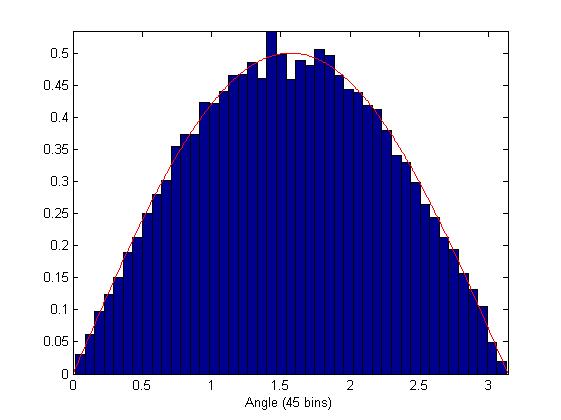}%
\\
Final normalised distribution of particles. In red, $\frac12 \sin(x)$.
\label{fig final distribution}%
\end{center}
As we can see, the outline of the final distribution tends to the graph of $%
\frac12
\sin\left(  x\right)  $. The small inaccuracy is explained by the fact that
only a finite (i.e., a discrete) number of initial angles is considered. The
smaller the subintervals in which $\left[  0,\pi\right]  $ is divided are, the
better the final distribution approximates $%
\frac12
\sin\left(  x\right)  $. We have repeated this experiment over several initial
distributions obtaining always similar results, which experimentally confirms
the validity of Knudsen's law for our model.

\appendix

\section{Appendix}

\begin{proof}
[Proof of Lemma \ref{lemma slices go to zero}]To start with, will prove that
\[
S\left(  I_{x}\bigcap J_{i_{1}\cdots i_{n}}\right)  =I_{\tau_{i_{n}}%
(x)}\bigcap J_{i_{1}\cdots i_{n-1}}.
\]
From the very definition of $S$ and $J_{i_{1}\cdots i_{n}}$, we have%
\[
S\left(  I_{x}\bigcap J_{i_{1}\cdots i_{n}}\right)  \subseteq I_{\tau_{i_{n}%
}(x)}\bigcap J_{i_{1}\cdots i_{n-1}}.
\]
On the other hand, let $\omega\in I_{\tau_{i_{n}}(x)}\bigcap J_{i_{1}\cdots
i_{n-1}}$. Then $\omega=(y,\tau_{i_{n}}(x))$ for some $y\in\lbrack0,1)$. Let
$\omega^{\prime}=(p_{i_{n}}(x)y+%
{\textstyle\sum\nolimits_{j=1}^{i_{n}-1}}
p_{j}(x),x)$. It is obvious that $\omega^{\prime}\in J_{i_{n}}$ as $p_{i_{n}%
}(x)>0$ because $I_{x}\bigcap J_{i_{1}\cdots i_{n}}$ is not empty. Moreover,
$S\left(  \omega^{\prime}\right)  =\omega$. Therefore $\omega\in S\left(
I_{x}\bigcap J_{i_{1}\cdots i_{n}}\right)  $ and%
\[
S\left(  I_{x}\bigcap J_{i_{1}\cdots i_{n}}\right)  \supseteq I_{\tau_{i_{n}%
}(x)}\bigcap J_{i_{1}\cdots i_{n-1}}.
\]
Identifying both fibers $I_{x}\bigcap J_{i_{1}\cdots i_{n}}$ and
$I_{\tau_{i_{n}}(x)}\bigcap J_{i_{1}\cdots i_{n-1}}$ as subsets of $[0,1)$,
the map%
\begin{equation}%
\begin{array}
[c]{rcl}%
S_{x}:I_{x}\bigcap J_{i_{1}\cdots i_{n}}\subset\lbrack0,1) & \longrightarrow &
I_{\tau_{i_{n}}(x)}\bigcap J_{i_{1}\cdots i_{n-1}}\\
y & \longmapsto & \varphi_{i_{n}}(y,\tau_{i_{n}}(x))=\frac{1}{p_{i_{n}}%
(x)}\left(  y-%
{\textstyle\sum\nolimits_{j=1}^{i_{n}-1}}
p_{j}(x)\right)  ,
\end{array}
\label{eq 37}%
\end{equation}
is then a diffeomorphism. After having identified $I_{x}\bigcap J_{i_{1}\cdots
i_{n}}$ as a subset of $[0,1)$, we can compute its Lebesgue measure
$\lambda\left(  I_{x}\bigcap J_{i_{1}\cdots i_{n}}\right)  $. Consequently,%
\begin{gather}
\lambda\left(  I_{x}\bigcap J_{i_{1}\cdots i_{n}}\right)  =\int_{_{I_{x}%
\bigcap J_{i_{1}\cdots i_{n}}}}d\lambda=\int_{I_{\tau_{i_{n}}(x)}\bigcap
J_{i_{1}\cdots i_{n-1}}}\frac{d(S_{x})^{-1}}{dy}d\lambda\nonumber\\
=\int_{I_{\tau_{i_{n}}(x)}\bigcap J_{i_{1}\cdots i_{n-1}}}p_{i_{n}}%
(x)d\lambda=p_{i_{n}}(x)\int_{I_{\tau_{i_{n}}(x)}\bigcap J_{i_{1}\cdots
i_{n-1}}}d\lambda\nonumber\\
=p_{i_{n}}(x)\lambda\left(  I_{\tau_{i_{n}}(x)}\bigcap J_{i_{1}\cdots i_{n-1}%
}\right)  .\label{eq 19}%
\end{gather}
Applying iteratively (\ref{eq 19}), we obtain%
\[
\lambda\left(  I_{x}\bigcap J_{i_{1}\cdots i_{n}}\right)  =p_{i_{n}%
}(x)p_{i_{n-1}}\left(  \tau_{i_{n}}(x)\right)  \cdots p_{1}(\tau_{i_{2}}%
\circ\cdots\circ\tau_{i_{n}}(x)).
\]
\medskip
\end{proof}

\noindent\textbf{Acknowledgements.} The authors would like to thank Wael
Bahsoun and Renato Feres for their enlightening comments and suggestions.

\end{document}